\newcommand{\R}{\mathbb{R}}         % real numbers
\newcommand{\N}{\mathbb{N}}         % natural numbers
\renewcommand{\S}{\mathbb{S}}          % symmetric matrices
\newcommand{\Dc}{\mathcal{D}}
\newcommand{\Pb}{\mathbf{P}}
\newcommand{\cge}{\succcurlyeq}
\newcommand{\cle}{\preccurlyeq}
\newcommand{\cg}{\succ}
\newcommand{\kron}{\otimes}         % kronecker product
\newcommand{\Delf}{\mathbf{\Delta}}
\newcommand{\diag}{\mathrm{diag}}
\newcommand{\col}{\mathrm{col}}
\newcommand{\Del}{\Delta}
\newcommand{\del}{\delta}
\newcommand{\ga}{\gamma}
\newcommand{\eps}{\varepsilon}
\newcommand{\la}{\lambda}
\newcommand{\mat}[2]{\left(\begin{array}{@{}#1@{}}#2\end{array}\right)} %ersetze #1 durch @{}#1@{} fuer weniger platz rechts und links
\newcommand{\smat}[1]{\left(\begin{smallmatrix}#1\end{smallmatrix}\right)}
\newcommand{\teq}[1]{\quad\text{#1}\quad} % for text in math enironments
\newenvironment{red_test}{\color{red}}{} % mark red
\renewcommand{\t}{\tilde}
\newcommand{\h}{\hat}
\newcommand{\ch}{\check}
\newcommand{\mstrut}[1]{\rule{0pt}{#1}}
\let\oldhline=\hline % remember original command
\renewcommand{\hline}{\oldhline\mstrut{2.5ex}}
\let\oldhdashline=\hdashline % remember original command
\renewcommand{\hdashline}{\oldhdashline\mstrut{2.5ex}}
\definecolor{uniSMittelblau}{RGB}{0,81,158}
\definecolor{uniSHellblau}{RGB}{0,190,255}
\newtheoremstyle{mystyle}%
{3pt}% Space above
{3pt}% Space below
{}% Body font
{}% Indent amount
{\itshape\color{uniSMittelblau}}% Theorem head font
{.}% Punctuation after theorem head
{.5em}% Space after theorem head
{}% Theorem head spec (can be left empty, meaning ‘normal’)
\theoremstyle{mystyle}
\newtheorem{theo}{Theorem}%[section]
\newtheorem{lemm}[theo]{Lemma}
\newtheorem{coro}[theo]{Corollary}
\newtheorem{defi}[theo]{Definition}
\newtheorem{assu}[theo]{Assumption}
\newtheorem{rema}[theo]{Remark}
\renewenvironment{proof}[1][Proof]{%\vspace{1.5ex}
	\bf #1. \rm}%{}%
{\hfill \footnotesize{$\blacksquare$}\vspace{2ex}}
\begin{document}
	\begin{frontmatter}
		
		\title{\textcolor{uniSMittelblau}{Input-Output-Data-Enhanced Robust Analysis via Lifting}\thanksref{footnoteinfo}}
		% Title, preferably not more than 10 words.
		
		\thanks[footnoteinfo]{Funded by Deutsche Forschungsgemeinschaft (DFG, German Research Foundation) under Germany’s Excellence Strategy - EXC 2075 - 390740016. We acknowledge the support by the Stuttgart Center for Simulation Science (SimTech).}
		
		\author[First]{Tobias Holicki and Carsten W. Scherer}
		
		\address[First]{Department of Mathematics, University of Stuttgart, Germany,
			e-mail: \{tobias.holicki, carsten.scherer\}@imng.uni-stuttgart.de
		}

		\begin{abstract}               % Abstract of not more than 250 words.					
			Starting from a linear fractional representation of a linear system affected by constant parametric uncertainties, we demonstrate how to enhance standard robust analysis tests by taking available (noisy) input-output data of the uncertain system into account. Our approach relies on lifting the system and the construction of data-dependent multipliers.
			It leads to a test in terms of linear matrix inequalities which guarantees stability and performance for all systems compatible with the observed data if it is in the affirmative.
			In contrast to many other data-based approaches, prior physical or structural knowledge about the system can be incorporated at the outset by exploiting the power of linear fractional representations.
		\end{abstract}

		\begin{keyword}
			Robustness Analysis, Data-Driven Analysis, Linear Matrix Inequalities.
		\end{keyword}
		
	\end{frontmatter}
	%===============================================================================
	
	%\begin{textblock}{9}(1.15, 16)
	%	This work has been submitted to IFAC for possible publication.
	%\end{textblock}
	
	\begin{textblock}{13}(1.55, 15.75)
		\fbox{
			\small \textcopyright~ 2022 the authors. This work has been accepted to IFAC for publication under a Creative Commons Licence CC-BY-NC-ND}
	\end{textblock}
	
	%	\begin{textblock}{12}(1.75, 15.75)
	%		\fbox{
	%			\begin{minipage}{\textwidth}
	%				\small \textcopyright~ 2022 IFAC. This work has been published in IFAC-PapersOnline under a Creative Commons Licence CC-BY-NC-ND. \\ DOI: 10.1016/j.ifacol.2020.12.981
	%			\end{minipage}
	%		}
	%	\end{textblock}

	%-----------------------------------------------------------------------------------------
	\section{INTRODUCTION}
	%-----------------------------------------------------------------------------------------
	
	In recent years direct data-driven analysis and control has gained a lot of attention even for linear time-invariant (LTI) systems \citep{HouWan13, MarDoe21}.
	The general theme is to leverage gathered data of an unknown system for its analysis or controller design. 
	Some of the key challenges are
	\begin{itemize}
		\item to provide strong theoretical \emph{guarantees} despite the fact that merely a finite amount of data points is available that are typically affected by noise.
		\item to systematically exploit \emph{prior physical or structural knowledge} which is available in almost any practical application and  typically required for data efficiency.
	\end{itemize}
	For example, the approaches in \citep{DeTes20} and \citep{WaaCam22} can provide such guarantees under suitable assumptions and even if merely a single trajectory of the system is available. These approaches rely on the fundamental lemma of Willems and a matrix version of the classical S-procedure, respectively. However, neither of them does take prior knowledge into account.

	The framework of linear fractional representations (LFRs) as discussed, e.g., by \cite{ZhoDoy98} and \cite{DoyPac91} is widely acknowledged as a powerful tool to systematically model dynamical systems affected by various types of uncertainties in a such a way that a priori known components are nicely separated from the unknown ones.
	Surprisingly, only few data-driven approaches exploit the LFR framework in order to incorporate prior knowledge, such as \citep{BerSch15, MarHen17, FieSch21, HolSch21}.
	Alternative approaches to include prior information can be found, e.g., in \citep{KobBag13, RohTri18}. However, in our view, LFRs offer the greatest modelling flexibility while permitting the use of well-developed and dedicated robust analysis tools.

	This paper is closely related to the one by \cite{BerSch20}, which systematically incorporates prior knowledge about the describing matrices of an otherwise unknown LTI system in order to design controllers merely based on one trajectory of the system. It relies on LFRs for incorporating prior information and employs multiplier based robust control for robustness analysis and controller design. The key ingredient is the construction of dedicated multipliers that take the gathered data into account.
	In particular, due to the applied robust control techniques, it yields guarantees for all systems that are compatible with the observed data.

	In contrast to \citep{BerSch20} which mostly deals with data in terms of an input-state trajectory, we consider the more challenging case that merely an input-output trajectory is available.
	To the best of our knowledge, all related approaches involving such input-output data revolve around systems described by higher order difference equations \citep{BerSch20, DeTes20, WaaEis22}. 
	Instead, we consider a system with a genuine LFR in which the unknown parts are viewed as parametric uncertainties. It is our expectation that this is instrumental for moving towards systems involving more general types of unknown parts. 
	
	\vspace{1ex}
	%-----------------------------------------------------------------------------------------
	\noindent\textit{Outline.} %
	%-----------------------------------------------------------------------------------------
	%
	The paper is organized as follows. After a short paragraph on notation, we  recall a classical robust analysis result and show how this test can be enhanced once noise-free input-output data are available in Subsections \ref{IOD::sec::basic_ana} and \ref{IOD::sec::main_noisefree}, respectively. In the rest of Section~\ref{IOD::sec::simplest_setting}, we elaborate on the assumptions involved in our main result and on how to relax them.
	Section~\ref{IOD::sec::noisy_data} is structured similarly and deals with the situation that the input-output trajectory is affected by noise.
	We conclude with a numerical example and some further remarks in Sections \ref{IOD::sec::example} and \ref{IOD::sec::conclusions}, respectively.

	\vspace{1ex}
	%-----------------------------------------------------------------------------------------
	\noindent\textit{Notation.} %
	%-----------------------------------------------------------------------------------------
	%
	Let $\ell_2^n\!:=\! \{(x(k))_{k=0}^\infty\!:  \sum_0^\infty \!x(k)^{\!\top}\! x(k)\!<\! \infty \}$ be the space of square summable sequences with elements in $\R^n$.
	Moreover, $\S^n$ denotes the set of symmetric $n\times n$ matrices.
	Finally, we use the abbreviation
	\begin{equation*}
		\arraycolsep=1pt
		\diag(X_1, \dots, X_N) := \smat{X_1 & & 0 \\[-1ex] & \ddots & \\ 0 & & X_N}
	\end{equation*}
	for matrices $X_1, \dots, X_N$, utilize the Kronecker product $\kron$ as defined in \citep{HorJoh91}
	and indicate objects that can be inferred by symmetry or are not relevant with the symbol ``$\bullet$''.

	%-----------------------------------------------------------------------------------------
	\section{NOISE-FREE DATA}\label{IOD::sec::simplest_setting}
	%-----------------------------------------------------------------------------------------

	For real matrices of appropriate dimensions and some initial condition $x(0) \in \R^n$, let us consider the discrete-time feedback interconnection
	\begin{equation}
		\label{eq::sys_simple}
		\mat{c}{x(k+1) \\   z(k) \\  y(k)}
		\!=\! \mat{ccc}{A & B_w & B_r  \\
			C_z & D_{zw} & D_{zr}  \\
			C_y & D_{yw} & D_{yr}}\!
		\mat{c}{x(k) \\ w(k) \\  r(k)},~~
		w(k) = \Del_\ast z(k)
	\end{equation}
	with interconnection signals $w$, $z$, an input $r$ to excite the system and a measurable output $y$. Moreover, this interconnection also involves a constant parametric uncertainty $\Del_\ast$ that is merely known to be contained in some given set $\Delf \subset \R^{n_w \times n_z}$ of potentially highly structured matrices.
	
	%-----------------------------------------------------------------------------------------
	\subsection{Linear Fractional Representations}
	%-----------------------------------------------------------------------------------------

	The description \eqref{eq::sys_simple} constitutes a linear fractional representation (LFR) which is a well-established modeling tool in the robust control community and widely used in practice \citep{ZhoDoy98}.
	These LFRs are usually employed in tandem with dedicated robust analysis tests \citep{DoyPac91} that take the known part of \eqref{eq::sys_simple} and the information on the uncertain part encoded in the set $\Delf$ into account.
	Frequently, $\Delf$ equals
	\begin{equation*}
		\left\{\diag(\del_1 I, \dots, \del_{m_r}I, \Del_1, \dots, \Del_{m_f})\,:\, |\del_j| \leq 1,~ \|\Del_j\|\leq 1 \right\}\!,
	\end{equation*}
	i.e., it consists of structured matrices with (repeated) diagonal and unstructured blocks on the diagonal that are all bounded in norm by one. Classically, one also considers dynamic uncertainties in the interconnection \eqref{eq::sys_simple}, but we restrict our attention to constant parametric ones.
	
	Let us note that the description \eqref{eq::sys_simple} trivially includes settings in which no prior knowledge at all is taken into account, such as in the work by \cite{DeTes20}. If we suppose that the full state can be measured, this corresponds to considering a system given by
	\begin{equation*}
		\mat{c}{x(k+1) \\ y(k)} = \mat{cc}{A_\ast & B_\ast \\ I & 0}\mat{c}{x(k) \\ r(k)}
	\end{equation*}
	with unknown matrices $A_\ast \in \R^{n\times n}$ and $B_\ast \in \R^{n \times n_r}$.
	Indeed, this can be subsumed to \eqref{eq::sys_simple} by choosing
	\begin{equation*}
		\mat{c|c:c}{A & B_w & B_r  \\ \hline
			C_z & D_{zw} & D_{zr}  \\ \hdashline
			C_y & D_{yw} & D_{yr}}
		= \mat{c|c:c}{0 & I & 0 \\ \hline \smat{I \\ 0} & 0 & \smat{0 \\ I} \\ \hdashline I & 0 & 0},\quad
		\Del_\ast = \mat{cc}{A_\ast & B_\ast}
	\end{equation*}
	and $\Delf = \R^{n \times (n+n_r)}$. 
	The numerical example in Section~\ref{IOD::sec::example} illustrates the power of LFRs to capture more sophisticated structural properties.

	%-----------------------------------------------------------------------------------------
	\subsection{A Basic Robust Analysis Result}\label{IOD::sec::basic_ana}
	%-----------------------------------------------------------------------------------------

	In order to analyze stability and performance properties of the interconnection \eqref{eq::sys_simple}, we rely on the full-block S-procedure as discussed in \citep{Sch00}.
	To this end, we require so-called multiplier sets and, for later purposes, dual variants thereof.	
	
	\begin{defi}
		$\Pb \subset \S^{n_z + n_w}$ is called a (dual) \emph{multiplier set} for the set of uncertainties  $\Delf\subset \R^{n_w \times n_z}$ if 
		\begin{equation*}
			\mat{c}{-\Del^\top \\ I}^\top P \mat{c}{-\Del^\top \\ I} \cle 0
			\teq{ for all }(\Del, P) \in \Delf \times \Pb
		\end{equation*}
		holds and if $\Pb$ admits an LMI representation, i.e., there exist affine matrix-valued maps $E, F$ such that $\Pb = \{E(\nu)\,:\, \nu \in \R^\bullet \text{ and }F(\nu) \cg 0\}$.
	\end{defi}

	A detailed discussion with concrete choices for multiplier sets can be found in \citep{Sch00,Sch05}. Let us now recall the following classical robust stability result.
	
	\begin{lemm}
		\label{IOD::lem::basic}
		Let $\Pb$ be a multiplier set for $\Delf$. If there exist %$X\in \S^n$ and $P\in \S^{n_z + n_w}$ satisfying
		\begin{subequations}
			\begin{equation}
				X \cg 0 \teq{ and }P \in \Pb
			\end{equation}
			satisfying
			\begin{equation}
				(\bullet)^\top \mat{cc|c}{X & 0 & \\ 0 & -X & \\ \hline & & P}\mat{cc}{I & 0 \\ -A^\top & -C_z^\top \\ \hline 0 & I \\ -B_w^\top & -D_{zw}^\top} \cg 0
			\end{equation}
		\end{subequations}
		then the interconnection \eqref{eq::sys_simple} is stable, i.e., $I - D_{zw}\Del_\ast$ is nonsingular and there exist $M, \ga > 0$ such that $\|x(t)\| \leq Me^{-\ga t}\|x(0)\|$ for all $t \geq 0$, all $x(0)\in \R^n$ and for $r  = 0$.
	\end{lemm}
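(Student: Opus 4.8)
The plan is to read off both assertions of the lemma, namely nonsingularity of $I - D_{zw}\Del_\ast$ and the exponential decay, directly from the single matrix inequality, after recasting it in scalar form. Expanding the quadratic form and writing $\eta := -B_w^\top\xi - D_{zw}^\top\zeta$ for brevity, that matrix inequality is equivalent to
\[
	\xi^\top X\xi - (A^\top\xi + C_z^\top\zeta)^\top X (A^\top\xi + C_z^\top\zeta) + \smat{\zeta \\ \eta}^\top P \smat{\zeta \\ \eta} > 0
\]
holding for every $\xi\in\R^n$ and $\zeta\in\R^{n_z}$ with $\smat{\xi \\ \zeta}\neq 0$. Observe that this is phrased entirely in the \emph{transposed} data $A^\top,C_z^\top,B_w^\top,D_{zw}^\top$, so the natural object to analyze will be the transposed closed loop.

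\emph{Well-posedness.} Putting $\xi = 0$ and using $X\cg 0$ to discard the nonnegative term $(C_z^\top\zeta)^\top X(C_z^\top\zeta)$ leaves $\smat{\zeta \\ -D_{zw}^\top\zeta}^\top P \smat{\zeta \\ -D_{zw}^\top\zeta} > 0$ for all $\zeta\neq 0$. If $I - D_{zw}\Del_\ast$, equivalently its transpose $I - \Del_\ast^\top D_{zw}^\top$, were singular, I would pick $\zeta\neq 0$ with $\zeta = \Del_\ast^\top D_{zw}^\top\zeta$; then with $v := -D_{zw}^\top\zeta$ one has $\smat{\zeta \\ -D_{zw}^\top\zeta} = \smat{-\Del_\ast^\top \\ I}v$, and since $\Del_\ast\in\Delf$ the defining property of the multiplier set $\Pb$ forces this same quadratic form to be $\leq 0$, which is a contradiction. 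Hence $I - D_{zw}\Del_\ast$ is nonsingular.

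\emph{Exponential stability.} With well-posedness secured, eliminating $w = \Del_\ast z$ and $z = (I - D_{zw}\Del_\ast)^{-1}C_z x$ in \eqref{eq::sys_simple} for $r = 0$ gives $x(k+1) = \h A x(k)$ with $\h A := A + B_w\Del_\ast(I - D_{zw}\Del_\ast)^{-1}C_z$, so it suffices to show that $\h A$ is Schur stable. To this end I would fix $\xi\neq 0$ and let $\zeta$ be the solution, unique by the previous step, of $(I - \Del_\ast^\top D_{zw}^\top)\zeta = \Del_\ast^\top B_w^\top\xi$. A short rearrangement shows $\zeta = -\Del_\ast^\top\eta$ for the associated $\eta = -B_w^\top\xi - D_{zw}^\top\zeta$, hence $\smat{\zeta \\ \eta} = \smat{-\Del_\ast^\top \\ I}\eta$ and the $P$-term in the displayed inequality is $\leq 0$ by the multiplier property; simultaneously $A^\top\xi + C_z^\top\zeta = \h A^\top\xi$, since transposing the formula for $\h A$ and using $(I - D_{zw}\Del_\ast)^{-\top} = (I - \Del_\ast^\top D_{zw}^\top)^{-1}$ reproduces exactly this expression. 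Discarding the nonpositive $P$-term in the displayed inequality then yields $\xi^\top\h A X\h A^\top\xi < \xi^\top X\xi$ for all $\xi\neq 0$, i.e.\ $\h A X\h A^\top\cl X$; together with $X\cg 0$ this is precisely the discrete-time Lyapunov inequality certifying that $\h A$ is Schur stable. A standard estimate for the powers of a Schur matrix finally supplies $M,\ga > 0$ with $\|x(t)\| = \|\h A^t x(0)\|\leq Me^{-\ga t}\|x(0)\|$ for all $t\geq 0$ and $x(0)\in\R^n$.

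I expect the main obstacle to be the choice of the auxiliary signal $\zeta$ in the second part: it must be selected so that the vector $\smat{\zeta \\ \eta}$ fed into the multiplier $P$ lands exactly in the range of $\smat{-\Del_\ast^\top \\ I}$ (only then can the multiplier inequality be invoked) while, at the same time, $A^\top\xi + C_z^\top\zeta$ collapses to $\h A^\top\xi$; it is precisely the nonsingularity established in the first part that makes these two requirements simultaneously solvable. Everything else, namely expanding the quadratic form behind the matrix inequality, eliminating the interconnection signals, and turning a strict discrete-time Lyapunov inequality into an exponential bound, is routine.
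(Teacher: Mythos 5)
Your proof is correct; the paper itself omits a proof of this lemma (recalling it as classical from the cited full-block S-procedure literature), but your argument is exactly the standard one and mirrors the proof the paper gives for its Theorem~\ref{IOD::theo::noise-free}: your choice of $\zeta$ solving $(I-\Del_\ast^\top D_{zw}^\top)\zeta=\Del_\ast^\top B_w^\top\xi$ is precisely the congruence with $\smat{I \\ \ch\Del_\ast^\top B_w^\top}$ used there, and the well-posedness contradiction is identical in structure.
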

	
	\vspace{1ex}

	Note that Lemma \ref{IOD::lem::basic} does not properly take into account that the uncertainty in the interconnection \eqref{eq::sys_simple} is constant and, hence, can result in a conservative stability test.
	It can be improved, e.g., by employing integral quadratic constraints with dynamic multipliers \citep{MegRan97}, but we do not pursue this possibility here.
	Instead, we enhance this test by incorporating experimentally gathered data of the interconnection \eqref{eq::sys_simple}.
	
	%-----------------------------------------------------------------------------------------
	\subsection{Incorporating Data}\label{IOD::sec::main_noisefree}
	%-----------------------------------------------------------------------------------------
	
	More precisely, our goal is to improve the stability test in Lemma \ref{IOD::lem::basic} based on the availability of an input-output trajectory of the interconnection \eqref{eq::sys_simple} initialized in a known point $x_\ast$ and on a finite time-horizon of length $h$:
	\begin{equation}\label{data}
		\Dc := \big(x_\ast, (r_\ast(k))_{k=0}^{h-1}, (y_\ast(k))_{k=0}^{h-1}\big)\in \R^n \times \R^{hn_r} \times \R^{hn_y}.
	\end{equation}
	The assumption about $x_\ast$ will be relaxed in Subsection \ref{IOD::sec::unknown_initial_condition}.
	Following \cite{BerSch20}, we will use this data to learn more information about the unknown $\Del_\ast$ beyond the available prior knowledge encoded in $\Delf$, and exploit it for analyzing stability.
	Formally, we introduce the set
	\begin{equation*}
		\Delf_\Dc := \left\{\Del: \begin{array}{l}\text{The measured output of \eqref{eq::sys_simple} with} \\ \text{$x(0) = x_\ast$ and $\Del_\ast$ replaced by $\Del$} \\ \text{in response to the input $r_\ast$ is $y_\ast$}
		\end{array} \right\}
	\end{equation*}
	of all constant parametric uncertainties in $\R^{n_w \times n_z}$ that are compatible with the observed data $\Dc$.
	Clearly, we have
	\begin{equation*}
		\Del_\ast \in \Delf \cap \Delf_\Dc \subset \Delf.
	\end{equation*}
	Our main result will assure stability of the interconnection \eqref{eq::sys_simple} for all parametric uncertainties in $\Delf \cap \Delf_\Dc$. This is expected to be highly beneficial since the latter set is often much smaller than $\Delf$.

	%-----------------------------------------------------------------------------------------
	\subsection{Main Result}
	%-----------------------------------------------------------------------------------------

	The approaches in \citep{BerSch20} and \citep{DeTes20} conceptually rely on stacking the vectors in the available input-state data $\big((r_\ast(k))_{k=0}^{h-1}, (x_\ast(k))_{k=0}^{h}\big)$ horizontally,	which leads to a convenient linear equation in the unknown matrix $\Del_\ast$ in the noise-free case. Concretely,	one introduces the matrices
	\begin{equation*}
		X_+ := \mat{ccc}{x_\ast(1) & \dots & x_\ast(h)}, \quad
		X := \mat{ccc}{x_\ast(0) & \dots & x_\ast(h-1)}
	\end{equation*}
	as well as
	\begin{equation*}
		R := \mat{ccc}{r_\ast(0) & \dots & r_\ast(h-1)}
	\end{equation*}
	and observes from \eqref{eq::sys_simple} that
	\begin{equation}
		\label{IOD::data_based_equation_state_feedback}
		\begin{aligned}
			X_+ &= \big[A + B_w\Del_\ast C_z\big] X+ \big[B_r + B_w\Del_\ast D_{zr}\big]R \\
			&= \big[ AX + B_rR\big] + B_w \Del_\ast \big[C_zX + D_{zr}R \big]
		\end{aligned}
	\end{equation}
	holds if $D_{zw}$ is assumed to vanish; in \citep{DeTes20}, this equation even simplifies to
	\begin{equation*}
		X_+ = \Del_\ast \smat{X \\ R}.
	\end{equation*}
	In the case of genuine input-output data, it seems to be no longer possible to obtain such an %convenient
	equation in $\Del_\ast$ due to the dynamics involved in the interconnection \eqref{eq::sys_simple}.

	In our work, we employ lifting as exposed, e.g., by \cite{CheFra95}, which corresponds to stacking signals vertically as follows.
	
	\begin{lemm}
		The discrete-time lifting operator
		\begin{equation*}
			\h \cdot\,:\,\ell_2^{n_s} \to \ell_2^{hn_s},~~ \h s(k) = \mat{c}{s(hk) \\ \vdots \\ s(hk + h-1)}
		\end{equation*}
		is an isometric isomorphism.
	\end{lemm}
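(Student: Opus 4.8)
The plan is to verify the three defining properties of an isometric isomorphism between Hilbert spaces: linearity, norm preservation, and surjectivity, using the fact that a linear norm-preserving map is automatically injective. Linearity is immediate from the componentwise definition of $\h\cdot$. The crux of everything else is the elementary observation that $(k,j) \mapsto hk + j$ is a bijection from $\N \times \{0,1,\dots,h-1\}$ onto $\N$, since every nonnegative integer $m$ admits a unique representation $m = hk + j$ with $k \geq 0$ and $0 \leq j \leq h-1$.

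First I would use this reindexing to compute, for an arbitrary sequence $s$ with values in $\R^{n_s}$,
\[
	\sum_{k=0}^\infty \h s(k)^\top \h s(k) = \sum_{k=0}^\infty \sum_{j=0}^{h-1} s(hk+j)^\top s(hk+j) = \sum_{m=0}^\infty s(m)^\top s(m).
\]
Since all summands are nonnegative, this identity is valid in $[0,\infty]$ irrespective of summability. It therefore shows simultaneously that $\h\cdot$ maps $\ell_2^{n_s}$ into $\ell_2^{hn_s}$ (the right-hand side is finite exactly when $s \in \ell_2^{n_s}$) and that $\|\h s\| = \|s\|$, so $\h\cdot$ is a well-defined linear isometry and in particular injective.

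It remains to check surjectivity. Given $t \in \ell_2^{hn_s}$, I would partition each vector $t(k) \in \R^{hn_s}$ into $h$ consecutive blocks $t(k) = \col(t_0(k),\dots,t_{h-1}(k))$ with $t_j(k) \in \R^{n_s}$, and define a sequence $s$ by $s(hk+j) := t_j(k)$ for $k \geq 0$, $0 \leq j \leq h-1$; this is unambiguous by the bijection above. Reversing the previous computation yields $\sum_{m} s(m)^\top s(m) = \sum_{k} t(k)^\top t(k) < \infty$, hence $s \in \ell_2^{n_s}$, and by construction $\h s = t$. Thus $\h\cdot$ is a surjective linear isometry between $\ell_2^{n_s}$ and $\ell_2^{hn_s}$, which (by polarization) is precisely an isometric isomorphism.

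The only real "obstacle" here is bookkeeping: making sure that the reindexing $(k,j) \leftrightarrow m$ is genuinely a bijection and that the interchange of the double sum of nonnegative terms is legitimate. There is no analytic subtlety beyond that, so the proof is essentially a direct verification.
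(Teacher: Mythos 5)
Your proof is correct and complete: the reindexing $(k,j)\mapsto hk+j$ is indeed a bijection, the rearrangement of the double series is justified by nonnegativity of the summands, and the explicit inverse construction settles surjectivity. The paper itself gives no proof of this lemma (it is quoted as a standard fact from the lifting literature, e.g.\ Chen and Francis), and your argument is exactly the standard verification that is being omitted; the only superfluous remark is the appeal to polarization at the end, since a surjective linear isometry between Hilbert spaces is already an isometric isomorphism by definition.
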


	Due to the variations of constants formula, we can then express the interconnection \eqref{eq::sys_simple} equivalently as
	\begin{equation}
		\label{IOD::eq::sys_simple_lifted}
		\mat{c}{\t x(k\!+\!1) \\   \h z(k) \\  \h y(k)}
		\hspace{-0.6ex}=\hspace{-0.6ex} \mat{ccc}{A^h & \h B_{hw} & \h B_{hr}  \\
			\h C_{hz} & \h D_{hzw}  &  \h D_{hzr}  \\
			\h C_{hy} & \h D_{hyw}  & \h D_{hyr}}\hspace{-1ex}
		\mat{c}{\t x(k) \\ \h w(k) \\  \h r(k)}\hspace{-0.7ex},~
		\h w(k) \!=\! \h \Del_h \h z(k)	
	\end{equation}
	with a new state defined through $\t x(k) := x(hk)$ and, for $i \in \{ w, r\}$ and $o\in \{z, y\}$, the lifted matrices
	\begin{equation*}
		\mat{c|c}{\bullet & \h B_{hi} \\ \hline
			\!\h C_{ho} & \h D_{hoi}}
		\hspace{-0.7ex}:=\hspace{-0.6ex} \mat{c|cccc}{\bullet & A^{h-1}B_i & A^{h-2}B_i & \dots & B_i \\ \hline
			C_o & D_{oi} & 0 & \dots & 0 \\
			C_oA & C_oB_i & D_{oi} & \ddots & \vdots \\
			\vdots & \vdots & \ddots & \ddots & 0 \\
			C_oA^{h-1} & C_oA^{h-2}B_i & \dots  & C_oB_i  & D_{oi}}\hspace{-0.8ex};
	\end{equation*}
	here and in the sequel, we further employ the abbreviations
	\begin{equation*}
		\h \Del_h := I_h \kron \Del_\ast
		\teq{ and }
		\ch \Del_h := \h \Del_h (I - \h D_{hzw} \h \Del_h)^{-1}.
	\end{equation*}
	After closing the loop in \eqref{IOD::eq::sys_simple_lifted} for 
	the trajectory generating the data $\Dc=(x_*,r_*,y_*)\in\R^n\times\R^{hn_r}\times\R^{hn_y}$, we obtain for $k=0$
	%and with $k=0$ , we note that $\hat r_*(0)=(r_*(i))_{i=0}^{h-1}=r_*$ and
	%Given the data $\Dc$, note that $\hat a$ In particular, for $k = 0$, if recalling the given data $\Dc$ and we obtain
	the following structured system of equations in the unknown $\Del_\ast$:
	\begin{equation}
		\label{IOD::eq::data_eq1}
		\arraycolsep=4pt
		\begin{aligned}
			0 &= y_\ast  \!-\!\big[\h C_{hy}\! +\! \h D_{hyw}\ch \Del_h\h C_{hz} \big]x_\ast \!-\!\big[\h D_{hyr} \!+\! \h D_{hyw}\ch \Del_h\h D_{hzr} \big] r_\ast \\
			&= \big[y_\ast - \h C_{hy}x_\ast - \h D_{hyr}r_\ast \big] - \h D_{hyw} \ch \Del_h \big[\h C_{hz}x_\ast + \h D_{hzr}r_\ast \big] \\
			&= \mat{cc}{\h D_{hyw}\ch \Del_h & I}\mat{ccc}{0 & - \h C_{hz} & -\h D_{hzr} \\ I& - \h C_{hy} & -\h D_{hyr}}\mat{c}{y_\ast \\ x_\ast \\ r_\ast}.
		\end{aligned}
	\end{equation}

	Building upon this data-based equation we introduce the one-parameter family of symmetric matrices
	\begin{equation}
		\Pb_\Dc := \left\{(\bullet)^\top q\mat{c}{y_\ast \\ x_\ast \\ r_\ast}^{\hspace{-1.2ex}\top}\hspace{-1ex} \mat{cc}{0 & I \\ -\h C_{hz}^\top & -\h C_{hy}^\top \\ -\h D_{hzr}^\top & -\h D_{hyr}^\top} ~\middle|~   q\in \R \right\}
		\label{IOD::eq::multiplier_noise_free}
	\end{equation}
	and conclude that any $P_\Dc \in \Pb_\Dc$ satisfies
	\begin{multline*}
		(\bullet)^\top P_\Dc \mat{c}{\ch \Del_h^\top \h D_{hyw}^\top \\ I} \\
		= (\bullet)^\top q \mat{c}{y_\ast \\ x_\ast \\ r_\ast}^{\hspace{-1.2ex}\top}\hspace{-1ex} \mat{cc}{0 & I \\ -\h C_{hz}^\top & -\h C_{hy}^\top \\ -\h D_{hzr}^\top & -\h D_{hyr}^\top}\! \mat{c}{\ch \Del_h^\top \h D_{hyw}^\top \\ I}
		%M\mat{cc}{I & \h D_{hyw}\ch \Del_h}\mat{ccc}{I& - \h C_{hy} & -\h D_{hyr} \\ 0 & - \h C_{hz} & -\h D_{hzr}}\mat{c}{y_\ast \\ x_\ast \\ r_\ast}q (\bullet)^\top
		\stackrel{\eqref{IOD::eq::data_eq1}}{=} 0.
	\end{multline*}
	By construction and the definition of $\Delf_\Dc$, this identity also holds for $\Del_\ast$ replaced by any $\Del \in \Delf_\Dc$. In other words,
	$\Pb_\Dc$ is a multiplier set for $\{-\h D_{hyw}\ch \Del_h~:~ \Del \in \Delf_\Dc\}$.

	In order to make effective use of $\Pb_\Dc$ for robust analysis, we proceed under the following technical assumption on the describing matrices of the channel from $w$ to $y$ in the LFR \eqref{eq::sys_simple}. We defer a discussion of this hypothesis to the next subsection.
	
	\begin{assu}\label{IOD::ass::kernel_inclusion}
		There exist some $\sigma \in \{1, \dots, h\}$ such that $\ker(\h D_{hyw}) \subset \ker\big(\mat{cc}{\h B_{\sigma w} & 0_{n \times (h-\sigma) n_w}}\big)$ holds.
	\end{assu}
	
	Note that Assumption \ref{IOD::ass::kernel_inclusion} implies $M \h D_{hyw} = (\h B_{\sigma w}, 0)$ and
	\begin{equation}
		M\h D_{hyw} \ch \Del_h
		=  \mat{cc}{\h B_{\sigma w} & 0} \mat{cc}{\ch \Del_{\sigma} & 0\\ \bullet & \bullet}
		= \h B_{\sigma w} \ch \Del_{\sigma}N
		\label{IOD::eq::data_eq2}
	\end{equation}
	by the block-triangular structure of $\ch\Del_h$ and if defining
	\begin{equation*}
		M := \mat{cc}{\h B_{\sigma w} & 0_{n \times (h-\sigma) n_w}}\h D_{hyw}^+
		\teq{ and }
		N:= \mat{cc}{I_{\sigma n_z} & 0}.
	\end{equation*}
	
	We are now in the position to state our first main result.
	\begin{theo}%-------------------------------------------------------------------------
		\label{IOD::theo::noise-free}
		Let $\Pb\subset \S^{n_z + n_w}$ be a multiplier set for $\Delf$, $\Pb_\Dc$ be as in \eqref{IOD::eq::multiplier_noise_free} and let Assumption \ref{IOD::ass::kernel_inclusion} be satisfied. Then the interconnection \eqref{eq::sys_simple} is stable if there exist matrices
		\begin{subequations}
			\label{IOD::theo::eq::LMI}
			\begin{equation}
				X \cg 0,\quad P = \smat{Q & S \\ S^\top & R}\in \Pb \teq{ and } P_\Dc \in \Pb_\Dc
			\end{equation}
			satisfying, with 
			$\h P := \smat{\!\!I_\sigma \kron Q & I_\sigma \kron S \\ I_\sigma \kron S^\top & I_\sigma \kron R}$, the LMI
			\begin{equation}
				(\bullet)^\top \mat{cc|c:c}{X & 0 & & \\ 0 & -X & & \\ \hline & & \h P & \\ \hdashline & & & P_\Dc}
				\mat{cc}{I & 0 \\ -(A^\sigma)^\top & - \h C_{\sigma z}^\top \\ \hline
					0 & I \\ -\h B_{\sigma w}^\top & -\h D_{\sigma zw}^\top \\ \hdashline
					0 & N^\top \\ M^\top & 0} \cg 0.
				\label{IOD::theo::eq::LMIb}
			\end{equation}
		\end{subequations}
	\end{theo}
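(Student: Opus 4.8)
The plan is to rephrase the claim as a discrete-time Lyapunov inequality for the $\sigma$-step closed-loop transition matrix and to extract that inequality from \eqref{IOD::theo::eq::LMIb} by a tailored S-procedure argument in the spirit of Lemma~\ref{IOD::lem::basic}. First I would pass to the interconnection obtained from \eqref{IOD::eq::sys_simple_lifted} by replacing $h$ with $\sigma$. Since lifting is an isometric isomorphism and $\t x(k)=x(\sigma k)$, it suffices to show that $\mathcal{A}_\sigma:=A^\sigma+\h B_{\sigma w}\ch\Del_\sigma\h C_{\sigma z}$ is Schur, where $\h\Del_\sigma:=I_\sigma\kron\Del_\ast$ and $\ch\Del_\sigma:=\h\Del_\sigma(I-\h D_{\sigma zw}\h\Del_\sigma)^{-1}$: closing the loop in the $\sigma$-lifted system and $\sigma$-fold composition of the original closed-loop map describe the same map $x(\sigma k)\mapsto x(\sigma(k+1))$ for $r=0$, so $\mathcal{A}_\sigma=A_{\mathrm{cl}}^\sigma$ with $A_{\mathrm{cl}}:=A+B_w\Del_\ast(I-D_{zw}\Del_\ast)^{-1}C_z$, and a matrix is Schur exactly when its $\sigma$-th power is. Well-posedness is inherited from the trajectory that generated $\Dc$: $I-D_{zw}\Del_\ast$ is nonsingular, hence so are $I-\h D_{\sigma zw}\h\Del_\sigma$ and $I-\h D_{hzw}\h\Del_h$ by the block-triangular structure of $\h D_{\sigma zw}$, $\h D_{hzw}$ (with $D_{zw}$ on the diagonal blocks), so $\ch\Del_\sigma$ and $\ch\Del_h$ are well defined. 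It thus remains to produce $X\cg0$ satisfying the Lyapunov inequality $X-\mathcal{A}_\sigma X\mathcal{A}_\sigma^\top\cg0$.

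To this end I would expand the left-hand side of \eqref{IOD::theo::eq::LMIb}: letting its outer factor act on $\smat{\zeta\\\eta}$ with $\zeta\in\R^n$, $\eta\in\R^{\sigma n_z}$ and abbreviating $a:=(A^\sigma)^\top\zeta+\h C_{\sigma z}^\top\eta$ and $b:=-\h B_{\sigma w}^\top\zeta-\h D_{\sigma zw}^\top\eta$, the LMI \eqref{IOD::theo::eq::LMIb} is equivalent to
\[
\zeta^\top X\zeta-a^\top Xa+\smat{\eta\\b}^\top\h P\smat{\eta\\b}+\smat{N^\top\eta\\M^\top\zeta}^\top P_\Dc\smat{N^\top\eta\\M^\top\zeta}>0\ \text{ for all }(\zeta,\eta)\neq(0,0).
\]
The crucial step is then, for an arbitrary $\zeta\neq0$, to substitute the \emph{adjoint} feedback choice $\eta:=\ch\Del_\sigma^\top\h B_{\sigma w}^\top\zeta$ (legitimate, $\ch\Del_\sigma$ being well defined). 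From $\ch\Del_\sigma(I-\h D_{\sigma zw}\h\Del_\sigma)=\h\Del_\sigma$ one obtains $(I-\h\Del_\sigma^\top\h D_{\sigma zw}^\top)\eta=\h\Del_\sigma^\top\h B_{\sigma w}^\top\zeta$, hence $\eta=-\h\Del_\sigma^\top b$, i.e.\ $\smat{\eta\\b}=\smat{-\h\Del_\sigma^\top\\I}b$. Three consequences follow. (i) $a=(A^\sigma+\h B_{\sigma w}\ch\Del_\sigma\h C_{\sigma z})^\top\zeta=\mathcal{A}_\sigma^\top\zeta$, so $a^\top Xa=\zeta^\top\mathcal{A}_\sigma X\mathcal{A}_\sigma^\top\zeta$. (ii) Since $\h\Del_\sigma=I_\sigma\kron\Del_\ast$ and $\h P$ has the matching Kronecker structure, $\smat{\eta\\b}^\top\h P\smat{\eta\\b}=b^\top\big(I_\sigma\kron\big[(\bullet)^\top P\smat{-\Del_\ast^\top\\I}\big]\big)b\le0$, because $\Del_\ast\in\Delf$ and $P\in\Pb$. (iii) By Assumption~\ref{IOD::ass::kernel_inclusion} and \eqref{IOD::eq::data_eq2}, $M\h D_{hyw}\ch\Del_h=\h B_{\sigma w}\ch\Del_\sigma N$; right-multiplying by $N^\top$ and using $NN^\top=I$ gives $\h B_{\sigma w}\ch\Del_\sigma=M\h D_{hyw}\ch\Del_h N^\top$, whence $N^\top\eta=(\h B_{\sigma w}\ch\Del_\sigma N)^\top\zeta=\ch\Del_h^\top\h D_{hyw}^\top M^\top\zeta$ and $\smat{N^\top\eta\\M^\top\zeta}=\smat{\ch\Del_h^\top\h D_{hyw}^\top\\I}M^\top\zeta$; since $P_\Dc\in\Pb_\Dc$ is a multiplier for $\{-\h D_{hyw}\ch\Del_h:\Del\in\Delf_\Dc\}$ (established before the theorem via \eqref{IOD::eq::data_eq1}) and $\Del_\ast\in\Delf_\Dc$, the last term equals $(M^\top\zeta)^\top\big((\bullet)^\top P_\Dc\smat{\ch\Del_h^\top\h D_{hyw}^\top\\I}\big)(M^\top\zeta)\le0$. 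Inserting (i)--(iii) into the displayed inequality and noting $\smat{\zeta\\\eta}\neq0$ whenever $\zeta\neq0$ yields $\zeta^\top X\zeta-\zeta^\top\mathcal{A}_\sigma X\mathcal{A}_\sigma^\top\zeta>0$ for all $\zeta\neq0$, i.e.\ $X-\mathcal{A}_\sigma X\mathcal{A}_\sigma^\top\cg0$. With $X\cg0$ this certifies that $\mathcal{A}_\sigma=A_{\mathrm{cl}}^\sigma$, hence $A_{\mathrm{cl}}$, is Schur, which together with well-posedness is the asserted stability of \eqref{eq::sys_simple}. Since $\Del_\ast$ enters none of the inequalities, the same argument in fact yields stability for every $\Del\in\Delf\cap\Delf_\Dc$.

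I expect the main obstacle to be the choice in the crucial step: one has to realize that \eqref{IOD::theo::eq::LMIb} should be tested against the \emph{transpose} of the natural feedback gain, $\eta=\ch\Del_\sigma^\top\h B_{\sigma w}^\top\zeta$, which is exactly what collapses $a$ to $\mathcal{A}_\sigma^\top\zeta$ while simultaneously placing $\smat{\eta\\b}$ in the range of $\smat{-\h\Del_\sigma^\top\\I}$ (so the $\h P$-term is harmless) and $\smat{N^\top\eta\\M^\top\zeta}$ in the range of $\smat{\ch\Del_h^\top\h D_{hyw}^\top\\I}$ (so the data multiplier $P_\Dc$ can be applied). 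Discharging sub-step (iii) is where Assumption~\ref{IOD::ass::kernel_inclusion} is indispensable, since \eqref{IOD::eq::data_eq2} is precisely the translation of the $h$-lifted data equation \eqref{IOD::eq::data_eq1} into the $\sigma$-lifted matrices that lets the a priori incompatible multipliers $\h P$ and $P_\Dc$ be combined in a single certificate. The remaining ingredients --- the lifting equivalence, the identity $\mathcal{A}_\sigma=A_{\mathrm{cl}}^\sigma$, the Kronecker identity for $\h P$, and well-posedness --- are routine.
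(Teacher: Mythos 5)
Your stability argument is essentially the paper's own: testing \eqref{IOD::theo::eq::LMIb} against $\smat{I \\ \ch\Del_\sigma^\top\h B_{\sigma w}^\top}$ (your substitution $\eta=\ch\Del_\sigma^\top\h B_{\sigma w}^\top\zeta$), collapsing the first block to $\mathcal{A}_\sigma^\top\zeta$, placing the second in the range of $\smat{-\h\Del_\sigma^\top\\ I}$ so that the Kronecker identity for $\h P$ applies, and using Assumption~\ref{IOD::ass::kernel_inclusion} via \eqref{IOD::eq::data_eq2} to place the third in the range of $\smat{\ch\Del_h^\top\h D_{hyw}^\top\\ I}$ so that $P_\Dc$ annihilates it. Steps (i)--(iii) reproduce the paper's three displayed identities exactly, and the passage from the $\sigma$-step Lyapunov inequality to Schur stability of $A_{\mathrm{cl}}$ is fine.

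The gap is well-posedness. You dismiss it with ``inherited from the trajectory that generated $\Dc$,'' but the existence of an input-output trajectory does \emph{not} imply that $I-D_{zw}\Del_\ast$ is nonsingular: the algebraic loop $z=C_zx+D_{zw}\Del_\ast z+D_{zr}r$ can be consistent (even with multiple solutions $z$) while $I-D_{zw}\Del_\ast$ is singular, so observed data is compatible with an ill-posed interconnection. This matters twice over: nonsingularity of $I-D_{zw}\Del_\ast$ is explicitly part of the stability notion being certified (see Lemma~\ref{IOD::lem::basic}), and your crucial substitution presupposes that $\ch\Del_\sigma$ and $\ch\Del_h$ exist, so the entire stability half of your argument is conditional on the very fact you have not established. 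The paper proves nonsingularity of $I-\h D_{\sigma zw}^\top\h\Del_\sigma^\top$ \emph{from the LMI}: assuming $(I-\h D_{\sigma zw}^\top\h\Del_\sigma^\top)u=0$ with $u\neq0$ and $v:=\h\Del_\sigma^\top u$, it shows both $(\bullet)^\top\h P\smat{I\\ -\h D_{\sigma zw}^\top}v\cle 0$ (via \eqref{IOD::pro::eq::lifted_apriori_multiplier}) and $(\bullet)^\top P_\Dc\smat{N^\top\\0}v=0$ --- the latter using only the \emph{existence} of some lifted interconnection signal $z_\ast$ together with the block-triangular structure of $\h\Del_h$ and $\h D_{hzw}$, not uniqueness --- and then contradicts the strict positivity of the lower-right block of \eqref{IOD::theo::eq::LMIb}. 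This is a substantive piece of the proof (note it is the only place the strict inequality on the $(2,2)$-block is used), and it is also what makes the conclusion valid for every $\Del\in\Delf\cap\Delf_\Dc$ rather than only for a $\Del_\ast$ assumed a priori to be well-posed. You would need to add this argument; the rest of your proposal then goes through as written.
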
%---------------------------------------------------------------------------
	
	\begin{proof}
		By the structures of $\h P$, $P \in \Pb$ and $\h \Del_{\sigma}$, we have
		\begin{equation}
			(\bullet)^\top \h P \mat{c}{-\h \Del_{\sigma}^\top \\ I}
			= I_\sigma \kron (\bullet)^\top P \mat{c}{-\Del_\ast^\top \\ I} \cle 0.
			\label{IOD::pro::eq::lifted_apriori_multiplier}
		\end{equation}
		This permits us to prove well-posedness and stability as defined in Lemma~\ref{IOD::lem::basic} in the following fashion.
		
		\textit{Well-posedness:} We prove that 
		$(I - \h D_{\sigma zw}^\top\h \Del_{\sigma}^\top)$ is nonsingular, which implies the same 
		for the matrix $I - D_{zw}^\top\Del_\ast^\top$ and, hence, for $I - D_{zw}\Del_\ast$ as well.
		
		Suppose that $(I - \h D_{\sigma zw}^\top\h \Del_{\sigma}^\top)u = 0$ for some $u \neq 0$ and set $v := \h \Del_{\sigma}^\top u$ to infer $u =  \h D_{\sigma zw}^\top v$ and $v\neq 0$. Then we conclude directly from \eqref{IOD::pro::eq::lifted_apriori_multiplier} that
		\begin{equation*}
			(\bullet)^{\!\top} \h P\mat{c}{I \\ \!-\h D_{\sigma zw}^\top}\!v
			=  (\bullet)^{\!\top} \h P\mat{c}{v \\ \!-u}
			=  (\bullet)^{\!\top} \h P\mat{c}{\!-\h \Del_{\sigma}^\top  \\ I}\!u \leq 0.
		\end{equation*}
		Next, let $z_\ast$ be any vertically stacked interconnection signal corresponding to the observed data $\Dc$. Then we infer from the definition of $P_\Dc$ and from the particular block triangular structure of $\h \Del_h$ and $\h D_{h zw}$ the identity
		\begin{equation*}
			\begin{aligned}
				(\bullet)^\top P_\Dc \mat{c}{N^\top \\ 0}v
				&= (\bullet)^\top q\,  \big[\h C_{hz}x_\ast + \h D_{hzr}r_\ast \big]^\top N^\top v \\
				&\stackrel{\mathclap{\eqref{IOD::eq::sys_simple_lifted}}}{=} (\bullet)^\top q\, \big[(I - \h D_{hzw}\h \Del_h)z_\ast \big]^\top N^\top v \\
				&= (\bullet)^\top q\, z_\ast^\top \!\mat{cc}{I - \h \Del_{\sigma}^\top \h D_{\sigma zw}^\top & \bullet \\ 0 & \bullet}\! \mat{c}{I_{\sigma n_z} \\ 0}\! v \\
				&= (\bullet)^\top q\, z_\ast^\top  \mat{c}{I - \h \Del_{\sigma}^\top \h D_{\sigma zw}^\top \\ 0}v = 0.
			\end{aligned}
		\end{equation*}
		Finally, the inequality for the right lower block of \eqref{IOD::theo::eq::LMIb}, $X \cg 0$, $v \neq 0$ and the latter two relations imply
		\begin{equation*}
			0 < (\bullet)^\top \h P \mat{c}{I \\ -\h D_{\sigma zw}^\top}v + (\bullet)^\top P_\Dc \mat{c}{N^\top \\ 0}v
			\leq 0,
		\end{equation*}
		which is a contradiction. Hence $I - \h D_{\sigma zw}^\top \h \Del_{\sigma}^\top$ is nonsingular. 
		
		\textit{Stability:} The key step is to right-multiply
		the LMI \eqref{IOD::theo::eq::LMIb} with $\smat{I \\ \ch \Del_{\sigma}^\top \h B_{\sigma w}^\top}$ and left-multiply its transpose. With the abbreviation $H :=-(I - \h D_{\sigma zw}^\top\h \Del_\sigma^\top)^{-1}\h B_{\sigma w}^\top$, we  note
		\begin{equation*}
			\mat{cc}{I & 0 \\ -(A^\sigma)^\top & - \h C_{\sigma z}^\top}\!
			\mat{c}{I \\ \ch \Del_\sigma^\top \h B_{\sigma w}^\top}
			\!=\!\mat{c}{I \\ -\big(A^\sigma \!+\! \h B_{\sigma w} \ch \Del_\sigma  \h C_{\sigma z}\big)^{\!\top}\!}\!,
		\end{equation*}
		\begin{equation*}
			\mat{cc}{0 & I \\ -\h B_{\sigma w}^\top & -\h D_{\sigma zw}^\top}
			\mat{c}{I \\ \ch \Del_\sigma^\top \h B_{\sigma w}^\top}
			=\mat{c}{- \h \Del_\sigma^\top \\ I} H,
		\end{equation*}
		and 
		\begin{equation}\label{key}
			\mat{cc}{0 & N^{\!\top} \\ M^{\!\top} & 0}\hspace{-0.75ex}
			\mat{c}{I \\ \!\ch \Del_\sigma^\top \h B_{\sigma w}^\top}
			\hspace{-0.6ex}=\hspace{-0.6ex} \mat{c}{N^{\!\top}\ch \Del_\sigma^\top \h B_{\sigma w}^\top \\ M^\top}
			\hspace{-0.6ex}\stackrel{\eqref{IOD::eq::data_eq2}}{=}\hspace{-0.6ex} \mat{c}{\ch \Del_h^\top \h D_{hyw}^\top \\ I}\hspace{-0.6ex}M^{\!\top}\!.
		\end{equation}
		%by \red{exploiting} Assumption \ref{IOD::ass::kernel_inclusion}.
		Consequently, the inequality \eqref{IOD::theo::eq::LMIb}, the full column rank of $\smat{I \\ \ch \Del_{\sigma}^\top \h B_{\sigma w}^\top}$, \eqref{IOD::pro::eq::lifted_apriori_multiplier} and $P_\Dc \in \Pb_\Dc$ lead to
		\begin{multline}\label{key2}
			(\bullet)^{\!\top} \!\mat{cc}{X & 0 \\ 0& -X} \!\mat{c}{I \\ -\big(A^\sigma \!+\! \h B_{\sigma w} \ch \Del_\sigma \h C_{\sigma z}\big)^{\!\top}} \cg \\
			- (\bullet)^{\!\top} \h P\! \mat{c}{- \h \Del_\sigma^\top \\ I}\! H
			- (\bullet)^{\!\top}  P_\Dc\!\mat{c}{\ch \Del_h^\top \h D_{h yw}^\top \\ I}\!M^{\!\top} \!\cge 0.
		\end{multline}
		This Lyapunov inequality in conjunction with $X \cg 0$
		implies that  $A^\sigma + \h B_{\sigma w} \ch \Del_\sigma \h C_{\sigma z}$ is Schur stable and, hence, that the lifted interconnection \eqref{IOD::eq::sys_simple_lifted} is stable. Then the original interconnection \eqref{eq::sys_simple} is stable as well.
	\end{proof}
	
	Let us now highlight the relevance of Assumption \ref{IOD::ass::kernel_inclusion}. It permits to conclude \eqref{key},
	which leads, in turn, to the first strict inequality in \eqref{key2}.
	Stability is assured by the second nonstrict inequality, which exploits
	the prior knowledge as encoded in $\h P$ for $P \in \Pb$ in conjunction with the information learned through data as encoded in $ P_\Dc \in \Pb_\Dc$.

	We emphasize that the novel data-integrated test is always 
	better than or at least as good as the the standard one based on 
	the prior knowledge only. In fact, if $X$ and $P$ satisfy the conditions
	in Lemma~\ref{IOD::lem::basic}, one can show that $X$ and the matrix
	$\hat P$ corresponding to $P$ fulfill those in Theorem~\ref{IOD::theo::noise-free} with $P_\Dc = 0$.
	This observation also permits us to speed up computations, by providing a non-trivial initial guess to solvers such as LMILab \citep{GahNem95} if optimizing over the data-integrated LMIs \eqref{IOD::theo::eq::LMI}.

	\begin{rema}
		\label{IOD::rema::structured_multiplier}
		Theorem \ref{IOD::theo::noise-free} is formulated for the special multiplier set $\{\h P\,:\, P \in \Pb \}$ for clarity.
		It stays true for any larger set of multipliers for $\{\h \Del_{\sigma}\,:\, \Del \in \Delf \}$, which allows us to exploit the diagonal structure of these uncertainties in order to reduce conservatism, at the cost of a higher computational burden.		
	\end{rema}

	\begin{rema}
		\label{IOD::rema::Toeplitz}
		For $o \in \{z, y\}$, we note that $\h C_{ho}x_\ast$ in the data-based equation \eqref{IOD::eq::data_eq1} can be expressed as $\h D_{hox}\h x_\ast$ with 
		\begin{equation*}
			\h D_{hox}\!:=\!\mat{cccc}{C_o & 0 & \dots & 0 \\
				C_oA & C_o & \ddots & \vdots \\
				\vdots & \ddots & \ddots & 0 \\
				C_oA^{h-1} & \dots  & C_oA  & C_o}
			\text{ and }
			\h x_\ast \!:=\! \mat{c}{x_\ast \\ 0 \\ \vdots \\ 0}
			\in \R^{hn}.
		\end{equation*}
		It is then a consequence of the block triangular and block Toeplitz structure of the matrices $\h D_{hyx}, \h D_{hyr}$, etc. that the data-based equation \eqref{IOD::eq::data_eq1} remains valid if we replace the stacked signals $\h x_\ast$, $y_\ast$ and $r_\ast$ with their corresponding Toeplitz matrices; for example, for  $y_\ast$ we would pick
		\begin{equation*}
			\mat{cccc}{y_\ast(0) & 0 & \dots & 0 \\ y_\ast(1) & y_\ast(0) & \ddots & \vdots \\
				\vdots & \ddots & \ddots & 0 \\
				y_\ast(h-1) & \dots & y_\ast(1) & y_\ast(0)} \in \R^{hn_y \times hn_y}.
		\end{equation*}
		We can then replace the scalar variable $q$ in  \eqref{IOD::eq::multiplier_noise_free} with a free matrix variable $Q \in \S^{h}$. This extra freedom  constitutes yet another possibility to improve the stability test in Theorem \ref{IOD::theo::noise-free} at the expense of a higher computational cost.	
	\end{rema}
	
	\begin{rema}
		\label{IOD::rema::multiple}
		Similarly, we can exploit the availability of multiple trajectories $(x_{\ast,j}, r_{\ast,j}, y_{\ast,j})\in \R^n \times \R^{hn_r}\times \R^{hn_y}$ for $j = 1, \dots, \nu$ of the same length by replacing the free scalar variable $q$ in \eqref{IOD::eq::multiplier_noise_free} with a free matrix variable $Q \in \S^\nu$ and the data-vector
		\begin{equation}
			\mat{c}{y_\ast \\ x_\ast \\ r_\ast}  \text{ with } \mat{ccc}{y_{\ast,1} & \dots & y_{\ast, \nu} \\ x_{\ast, 1} & \dots & x_{\ast, \nu} \\ r_{\ast, 1} & \dots & r_{\ast, \nu}} \in \R^{n+h(n_r+n_y) \times \nu}.
			\label{IOD::eq::multiple_data_rema}
		\end{equation}
		This also permits us to recover the findings of \cite{BerSch20} for an LFR with $D_{zw}=0$ and input-state data 
		$\big((r_\ast(k))_{k=0}^{h-1}, (x_\ast(k))_{k=0}^{h}\big)$. 
		Indeed, with $(C_y, D_{yw}, D_{yr}) := (A, B_w, B_r)$ and by splitting the trajectory into multiple ones of length one as
		\begin{equation*}
			(x_\ast(0), r_\ast(0), x_\ast(1)), \dots, (x_\ast(h-1), r_\ast(h-1), x_\ast(h)),
		\end{equation*}
		the matrix in \eqref{IOD::eq::multiple_data_rema} equals $(X_+^\top, X^\top, R^\top)^\top$ and the data equation \eqref{IOD::eq::data_eq1} simplifies to \eqref{IOD::data_based_equation_state_feedback} since no lifting is required. Note that Assumption \ref{IOD::ass::kernel_inclusion} is trivially satisfied with $\sigma = 1$.
	\end{rema}

	\subsection{On Assumption \ref{IOD::ass::kernel_inclusion}}
	
	Intuitively, based on the measurements $y$,	the solution of our problem requires to reconstruct the relevant parts of the uncertain signal $w$ entering the state $x$, at least after some period of time $h - \sigma$. This is indeed reflected by the key relation \eqref{IOD::eq::data_eq2}, which is a consequence of Assumption~\ref{IOD::ass::kernel_inclusion}.
	
	Note that  Assumption \ref{IOD::ass::kernel_inclusion} obeys the following monotonicity property. In particular, if it is satisfied for some horizon length $h$, then it is also satisfied for horizons $\t h > h$.
	
	\begin{lemm}
		Let $\ker(\h D_{hyw}) \subset \ker\big(\mat{cc}{\h B_{\sigma w} & 0}
		\big)$ for some $\sigma \in \{1, \dots, h\}$. Then $\ker(\h D_{h+1,yw}) \subset \ker\big(\mat{cc}{\h B_{\sigma+1, w} & 0}
		\big).$
	\end{lemm}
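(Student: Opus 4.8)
The plan is to rephrase the purely algebraic kernel inclusion as a statement about \emph{zero-output trajectories} of the LTI system $(A,B_w,C_y,D_{yw})$---precisely the viewpoint indicated in the paragraph preceding the lemma. For $v=(v_0,\dots,v_{h-1})\in\R^{hn_w}$ put $\xi_0:=0$ and $\xi_{k+1}:=A\xi_k+B_wv_k$. Reading off the explicit block-Toeplitz form of $\h D_{hyw}$ and the block form of $\h B_{hw}$ recalled earlier, one has $v\in\ker\h D_{hyw}$ iff $C_y\xi_k+D_{yw}v_k=0$ for $k=0,\dots,h-1$, and $(\h B_{\sigma w},0_{n\times(h-\sigma)n_w})\,v=\xi_\sigma$. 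So the lemma's hypothesis says: every length-$h$ zero-output trajectory starting from $\xi_0=0$ has $\xi_\sigma=0$, and we must show that every length-$(h+1)$ such trajectory has $\xi_{\sigma+1}=0$ (the latter being $(\h B_{\sigma+1,w},0_{n\times(h-\sigma)n_w})\,v$ for $v$ of length $h+1$).

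So I would take $v=(v_0,\dots,v_h)\in\ker\h D_{h+1,yw}$ with trajectory $\xi$, i.e.\ $C_y\xi_k+D_{yw}v_k=0$ for $k=0,\dots,h$, and proceed in two applications of the hypothesis. First, the truncation $(v_0,\dots,v_{h-1})$ satisfies the first $h$ of these equations, hence lies in $\ker\h D_{hyw}$; by the hypothesis (and $\sigma\le h$) this yields $\xi_\sigma=0$, so $\xi_{\sigma+1}=A\xi_\sigma+B_wv_\sigma=B_wv_\sigma$. The decisive step is to manufacture a \emph{second} length-$h$ admissible input from the tail $(v_\sigma,\dots,v_h)$ of $v$ by prepending $\sigma-1$ zero blocks, $v'':=(0,\dots,0,v_\sigma,v_{\sigma+1},\dots,v_h)\in\R^{hn_w}$. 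Its trajectory $\xi''$ stays at $0$ through time $\sigma-1$ (the first $\sigma-1$ inputs vanish), so $\xi''_\sigma=B_wv_\sigma=\xi_{\sigma+1}$ and inductively $\xi''_k=\xi_{k+1}$ for $k\ge\sigma-1$. The output equations $C_y\xi''_k+D_{yw}v''_k=0$ hold trivially for $k\le\sigma-2$ and, for $\sigma-1\le k\le h-1$, reduce (via $\xi''_k=\xi_{k+1}$ and $v''_k=v_{k+1}$) to the equations $C_y\xi_{k+1}+D_{yw}v_{k+1}=0$ which $v$ satisfies for the indices $\sigma,\dots,h$. Hence $v''\in\ker\h D_{hyw}$, and a second application of the hypothesis gives $\xi''_\sigma=0$, i.e.\ $\xi_{\sigma+1}=0$; unwinding the reformulation yields $(\h B_{\sigma+1,w},0_{n\times(h-\sigma)n_w})\,v=0$, as required.

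I expect the only real difficulty to be pinning down the correct auxiliary input $v''$. Merely dropping the last block of $v$ recovers $\xi_\sigma=0$ but is one step short of the claim; taking the tail $(v_\sigma,\dots,v_h)$ and padding with \emph{trailing} zeros fails, because the free evolution after the inputs are exhausted produces nonzero output (it would require $\xi_{h+1}$ to lie in an unobservable subspace, which is not available) and it would moreover localize the target state at time $2\sigma$ rather than $\sigma+1$. Prepending zeros is exactly the fix: the leading zeros contribute no output and re-synchronize the shifted trajectory so that its state at the ``observation instant'' $\sigma$ is precisely $\xi_{\sigma+1}$, while all remaining output equations are inherited verbatim from those of $v$. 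With $v''$ chosen this way, verifying $v''\in\ker\h D_{hyw}$ and the bookkeeping relating $\xi''$ to $\xi$ are routine, and the boundary cases $\sigma=1$ (no padding) and $\sigma=h$ (nothing to inherit beyond the last output equation) need no separate treatment.
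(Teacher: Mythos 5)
Your proof is correct and follows essentially the same route as the paper: both apply the hypothesis twice, first to the truncation $(v_0,\dots,v_{h-1})$ to get $\xi_\sigma=0$, and then to the left-zero-padded shifted tail $(0,\dots,0,v_\sigma,\dots,v_h)$ — which is exactly the vector $\smat{0_{n_w(\sigma-1)}\\x_2\\x_3\\x_4}$ appearing in the paper's argument — to get $B_wv_\sigma=0$. The only difference is presentational: you phrase the block-Toeplitz kernel conditions as zero-output trajectories of $(A,B_w,C_y,D_{yw})$, whereas the paper manipulates the partitioned lifted matrices directly.
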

	
	\begin{proof}
		By the structure of the lifted matrices, we have
		\begin{equation*}
			\mat{c}{\h B_{kw} \\ \hline \h D_{kyw}}
			= \mat{cc}{A^m\h B_{lw} & \h B_{mw} \\ \hline
				\h D_{lyw} & 0 \\ \h C_{my} \h B_{lw} & \h D_{myw}}
		\end{equation*}
		for any $k,l,m \in \N$ with $k = l+m$. Next, let $m:= h-\sigma+1$ and $x = (x_i)_{i=1}^4$ with $x_1 \in \R^{n_w \sigma}$, $x_2,x_4 \in \R^{n_w}$ and $x_3 \in \R^{n_wm}$ satisfy
		$\h D_{h+1,yw}x = 0$. Then we infer
		\begin{equation*}
			0 %= \h D_{h+1,yw} \mat{c}{x \\y \\ z}
			= \mat{c:c}{\h D_{hyw} & 0 \\ C_y \h B_{hw} & D_{yw}}\mat{c}{x_1 \\x_2 \\ x_3\\ \hdashline x_4}
			= \mat{c:c}{\h D_{\sigma yw} & 0 \\ \h C_{m y} \h B_{\sigma w} & \h D_{m yw} } \mat{c}{x_1 \\ \hdashline x_2 \\ x_3 \\ x_4}\!.
		\end{equation*}
		By assumption, this yields $\h B_{\sigma w}x_1 = 0$ and, hence,
		\begin{equation*}
			0 = \mat{c}{0 \\ \h D_{myw}} \mat{c}{x_2 \\ x_3 \\ x_4}
			= \h D_{hyw} \mat{c}{0_{n_w(\sigma - 1)} \\ x_2 \\ x_3 \\ x_4}.
		\end{equation*}
		This time, we have $0 = \h B_{\sigma w}  \smat{0_{n_w(\sigma - 1)} \\ x_2} = B_wx_2$ by the assumption and, in summary, we get
		\begin{equation*}
			\mat{cc}{\h B_{\sigma+1,w} & 0}x
			= \mat{cccc}{A \h B_{\sigma w} & B_w} \mat{c}{x_1 \\ x_2} = 0
		\end{equation*}
		which was to be shown.
	\end{proof}
	
	Note that Assumption \ref{IOD::ass::kernel_inclusion} depends on the particular LFR as used in \eqref{IOD::eq::sys_simple_lifted}. It can often be rendered satisfied by suitable rather immediate adaptations.
	For example, suppose we only know $\ker(\h D_{hyw}) \subset \ker(\h B_{hw_2})$ in some partition $\h B_{hw} = \smat{\h B_{hw_1} \\ \h B_{hw_2}}$, i.e., $M \h D_{hyw} = \h B_{hw_2}$ for some matrix $M$.
	Then we can rewrite the  lifted interconnection \eqref{IOD::eq::sys_simple_lifted} without the channel $r \to y$ as
	\begin{equation*}
		\mat{c}{\t x(k\!+\!1) \\   \h z(k) \\ \h z(k)}
		= \mat{ccc}{A^h & \smat{\h B_{hw_1}\\ 0} & \smat{0 \\ \h B_{hw_2}}  \\
			\h C_{hz} & \h D_{hzw}  &  0  \\
			\h C_{hz} & 0 & \h D_{hyw}}\hspace{-1ex}
		\mat{c}{\t x(k) \\ \h w_1(k) \\ \h w_2(k)}\hspace{-0.7ex},~
		%
		%\h w(k) \!=\! \h \Del_h \h z(k)	
	\end{equation*}
	\begin{equation*}
		\h w_1(k) = \h w_2(k) = \h w(k) = \h \Del_h \h z(k).
	\end{equation*}
	This leads to a corresponding variant of Theorem \ref{IOD::theo::noise-free}
	in which the matrix $\smat{0 & N^\top \\ M^\top & 0}$ is replaced with 
	$\smat{0 & 0 & 0 & I \\ 0 & M^\top & 0 & 0}$. 
	%The proof of the corresponding variant of Theorem \ref{IOD::theo::noise-free} relies on the  multiplication with \red{the matrix}
	The proof 
	%of the corresponding variant of Theorem \ref{IOD::theo::noise-free} 
	then relies on the  multiplication with the matrix
	\begin{equation*}
		\mat{cc}{I & 0 \\ 0 & I \\ \ch \Del_h^\top \h B_{hw_1}^\top & 0 \\ 0 & \ch \Del_h^\top \h B_{hw_2}^\top} \text{ instead of} \mat{c}{I \\ \ch \Del_h^\top \h B_{hw}^\top}.
	\end{equation*}
	%If we multiply this matrix from the left with $\smat{0 & 0 & 0 & I \\ 0 & M^\top & 0 & 0}$ instead of %$\smat{I \\ \ch \Del_h^\top \h B_{hw}^\top}$ with $\smat{0 & N^\top \\ M^\top & 0}$ in %\eqref{IOD::theo::eq::LMIb}, 
	For the product corresponding to \eqref{key}, we then obtain
	\begin{equation*}
		\mat{cc}{0 & \ch \Del_h^\top \h D_{hyw}^\top M^\top \\ 0 & M^\top}
		= \mat{c}{\ch \Del_h^\top \h D_{hyw}^\top \\ I} \mat{cc}{0 & M^\top},
	\end{equation*}
	which permits us to conclude the proof as before.
	
	Finally, notice that a similar assumption is as well required in the results of \cite{BerSch20} if working with the general version of the LFR \eqref{eq::sys_simple}.

	%-----------------------------------------------------------------------------------------
	\subsection{Unknown Initial Condition}\label{IOD::sec::unknown_initial_condition}
	%-----------------------------------------------------------------------------------------

	If the initial condition $x_\ast$ corresponding to the observed finite-horizon input-output trajectory is unknown, we can no longer use the multiplier set $\Pb_\Dc$ in \eqref{IOD::eq::multiplier_noise_free} since it explicitly depends on $x_\ast$.
	In order to construct a useful stability test, it is required to have some prior information about the location of $x_*$ available, which is once again encoded in a suitable family of multipliers for the uncertainty set $\{x_*\}$. % as follows.}
	
	\begin{assu}
		\label{IOD::ass::initial_state}
		$\Pb_x \subset \S^{1+n}$ is a multiplier set for $\{x_\ast\}$, i.e., $\Pb_x$ has an LMI representation and $x_*$ satisfies
		\begin{equation}
			\mat{c}{-x_\ast^\top \\ I}^\top P_x \mat{c}{-x_\ast^\top \\ I} \cle 0 \teq{ for all }P_x \in \Pb_x.
			\label{IOD::ass:eq::initial_state_ineq}
		\end{equation}
	\end{assu}
	
	As a typical example, $x_*$ might be known to be located in some ellipsoid. In other words, $x_*$ satisfies
	%Often, we can at least determine some ellipsoid containing the initial state, i.e., we have some
	\begin{equation*}
		0 \leq \kappa - x_\ast^\top Y x_\ast
		= \mat{c}{1 \\ x_\ast}^\top \mat{cc}{\kappa & 0 \\ 0 & -Y} \mat{c}{1 \\ x_\ast}
	\end{equation*}
	for some fixed $Y \cg 0$ and $\kappa > 0$.
	By the dualization lemma as stated, e.g., in \citep{Sch00}, we conclude that  \eqref{IOD::ass:eq::initial_state_ineq} holds for $\Pb_x := \left\{ q\, \smat{\kappa & 0 \\ 0 & -Y}^{-1}\,:\, q\geq 0\right\}$.

	In order to construct a suitable multiplier set involving all of the available data, we express the data equation \eqref{IOD::eq::data_eq1} as 
	\begin{equation}
		0 =\! \mat{cc}{\h D_{hyw}\ch \Del_h & I}\mat{ccc}{0 & - \h C_{hz} & -\h D_{hzr} \\ I& - \h C_{hy} & -\h D_{hyr}}\mat{cc}{y_\ast & 0 \\ 0 & I_n \\ r_\ast & 0} \!\mat{c}{I \\ x_\ast}.
		\label{IOD::eq::data_eq1b}
	\end{equation}
	Together with Assumption\,\ref{IOD::ass::initial_state}, this yields the multiplier set
	\begin{equation}
		\Pb_\Dc := \left\{(\bullet)^{\!\top} P_x\mat{cc}{y_\ast & 0\\ 0 & I_n \\ r_\ast & 0}^{\hspace{-1.2ex}\top}\hspace{-1ex} \mat{cc}{0 & I \\ -\h C_{hz}^\top & -\h C_{hy}^\top \\ -\h D_{hzr}^\top & -\h D_{hyr}^\top} \,\middle|\,   P_x \in \Pb_x\right\}\!.
		\label{IOD::eq::multiplier_set_noise_free_unknown_x0}
	\end{equation}
	Indeed, by \eqref{IOD::eq::data_eq1b} there exists some matrix $V$ with
	\begin{equation*}
		\mat{cc}{y_\ast & 0\\ 0 & I \\ r_\ast & 0}^{\hspace{-1.2ex}\top}\hspace{-1ex} \mat{cc}{0 & I \\ -\h C_{hz}^\top & -\h C_{hy}^\top \\ -\h D_{hzr}^\top & -\h D_{hyr}^\top}\! \mat{c}{\ch \Del_h^\top \h D_{hyw}^\top \\ I}
		= \mat{c}{-x_\ast^\top \\ I} V
	\end{equation*}
	since the matrix on the left hand side is contained in $\ker\big( \mat{cc}{I & x_\ast^\top}\big)$ and since $\smat{-x_\ast^\top \\ I}$ is a basis matrix of this nullspace. 
	For any multiplier $P_\Dc \in \Pb_\Dc$, we then immediately obtain 
	\begin{equation*}
		(\bullet)^\top P_\Dc \mat{c}{\ch \Del_h^\top \h D_{hyw}^\top \\ I}
		= (\bullet)^\top P_x \mat{c}{-x_\ast^\top \\ I}V
		\stackrel{P_x \in \Pb_x}{\cle} 0.
	\end{equation*}

	This brings us to the following corollary, whose proof proceeds along the lines of the one of Theorem \ref{IOD::theo::noise-free}.
	
	\begin{coro}
		Under Assumption \ref{IOD::ass::initial_state}
		%In the case that the initial condition $x_\ast$ corresponding to the observed data is not known exactly
		and if the multiplier set $\Pb_\Dc$ in \eqref{IOD::eq::multiplier_noise_free} is replaced by \eqref{IOD::eq::multiplier_set_noise_free_unknown_x0},
		Theorem \ref{IOD::theo::noise-free} remains valid.
	\end{coro}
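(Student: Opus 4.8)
The plan is to re-run the proof of Theorem~\ref{IOD::theo::noise-free} almost line by line, since the only structural change is that $\Pb_\Dc$ is now defined through \eqref{IOD::eq::multiplier_set_noise_free_unknown_x0} rather than \eqref{IOD::eq::multiplier_noise_free}. First I would record which ingredients are left untouched: inequality \eqref{IOD::pro::eq::lifted_apriori_multiplier} involves only $\h P$ and $P\in\Pb$, so it remains available verbatim, and the three product identities in the stability argument (in particular \eqref{key}), together with \eqref{IOD::eq::data_eq2}, rest solely on Assumption~\ref{IOD::ass::kernel_inclusion} and likewise carry over unchanged. The only property of the data multiplier that the proof of Theorem~\ref{IOD::theo::noise-free} actually exploits is $(\bullet)^\top P_\Dc \smat{\ch\Del_h^\top \h D_{hyw}^\top \\ I} \cle 0$ for $P_\Dc\in\Pb_\Dc$ and $\Del\in\Delf_\Dc$, and this has already been verified for the new $\Pb_\Dc$ in the paragraph preceding the corollary by means of Assumption~\ref{IOD::ass::initial_state} and \eqref{IOD::eq::data_eq1b}.

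Granting this, the stability part requires no change at all: right-multiplying \eqref{IOD::theo::eq::LMIb} by $\smat{I \\ \ch\Del_\sigma^\top \h B_{\sigma w}^\top}$, carrying out the same block simplifications, and invoking \eqref{IOD::pro::eq::lifted_apriori_multiplier} together with the above multiplier inequality yields the Lyapunov inequality \eqref{key2}; in conjunction with $X\cg0$ this forces $A^\sigma + \h B_{\sigma w}\ch\Del_\sigma\h C_{\sigma z}$ to be Schur, so the lifted and hence the original interconnection are stable. Nothing in this part sees the precise shape of $\Pb_\Dc$ beyond that one multiplier inequality.

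The one place that genuinely needs reworking---and the step I expect to be the main obstacle---is well-posedness, because the original argument relied on the \emph{exact} identity $(\bullet)^\top P_\Dc \smat{N^\top \\ 0} v = 0$, whereas the new multiplier will in general only furnish a nonpositive quantity. The plan is to start again from a hypothetical $u\neq0$ with $(I - \h D_{\sigma zw}^\top \h\Del_\sigma^\top)u = 0$ and $v := \h\Del_\sigma^\top u$, so that $v\neq0$, $u = \h D_{\sigma zw}^\top v$ and therefore $(I - \h\Del_\sigma^\top \h D_{\sigma zw}^\top)v = 0$, exactly as in Theorem~\ref{IOD::theo::noise-free}; the bound $(\bullet)^\top \h P \smat{I \\ -\h D_{\sigma zw}^\top} v \le 0$ is then obtained verbatim. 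For the new multiplier, write $P_\Dc = (\bullet)^\top P_x V_\Dc$, where $V_\Dc$ denotes the data matrix appearing immediately after $P_x$ in \eqref{IOD::eq::multiplier_set_noise_free_unknown_x0}. Running the same block-triangular manipulation as in the well-posedness step of Theorem~\ref{IOD::theo::noise-free}---using the loop relation $(I - \h D_{hzw}\h\Del_h)z_\ast = \h C_{hz}x_\ast + \h D_{hzr}r_\ast$ for a stacked interconnection signal $z_\ast$ of $\Dc$, the block-upper-triangular structure of $I - \h\Del_h^\top \h D_{hzw}^\top$, and $(I - \h\Del_\sigma^\top \h D_{\sigma zw}^\top)v = 0$---one obtains $(\h C_{hz}x_\ast + \h D_{hzr}r_\ast)^\top N^\top v = 0$, whence $V_\Dc\smat{N^\top \\ 0}v$ lies in $\ker\smat{I & x_\ast^\top}$ and thus equals $\smat{-x_\ast^\top \\ I}\mu$ for some $\mu$. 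Consequently $(\bullet)^\top P_\Dc \smat{N^\top \\ 0} v = (\bullet)^\top P_x \smat{-x_\ast^\top \\ I}\mu \le 0$ by \eqref{IOD::ass:eq::initial_state_ineq}, and a nonpositive value is all that is needed to reach the same contradiction $0 < (\bullet)^\top \h P \smat{I \\ -\h D_{\sigma zw}^\top} v + (\bullet)^\top P_\Dc \smat{N^\top \\ 0} v \le 0$. Hence $I - \h D_{\sigma zw}^\top\h\Del_\sigma^\top$ is nonsingular, which yields well-posedness and, with the stability part, the corollary. In short, the whole proof is a transcription of that of Theorem~\ref{IOD::theo::noise-free}, the only new point being the verification that this vector lies in the prescribed nullspace so that Assumption~\ref{IOD::ass::initial_state} can take over the role played by the explicit $x_\ast$ in the noise-free multiplier set.
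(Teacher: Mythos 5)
Your proposal is correct and follows exactly the route the paper intends: the paper omits the proof, stating only that it ``proceeds along the lines of'' that of Theorem~\ref{IOD::theo::noise-free}, with the essential new ingredient --- the nonpositivity $(\bullet)^\top P_\Dc \smat{\ch\Del_h^\top \h D_{hyw}^\top \\ I} \cle 0$ for the multiplier set \eqref{IOD::eq::multiplier_set_noise_free_unknown_x0} via Assumption~\ref{IOD::ass::initial_state} --- already verified in the paragraph preceding the corollary. Your additional care in the well-posedness step, noting that the exact identity $(\bullet)^\top P_\Dc \smat{N^\top \\ 0}v = 0$ degrades to a nonpositive quantity which still suffices for the contradiction, is a correct and worthwhile filling-in of a detail the paper leaves implicit.
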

	
	In case that $x_\ast$ is known, we can recover the multiplier set \eqref{IOD::eq::multiplier_noise_free} from \eqref{IOD::eq::multiplier_set_noise_free_unknown_x0} by choosing $\Pb_x := \left\{q \smat{1 & x_\ast^\top \\ x_\ast & x_\ast x_\ast^\top}:\, q\in \R\right\}$. An analysis of the asymptotic behaviour for $\kappa\to\infty$ is left for future research.
	
	%-----------------------------------------------------------------------------------------
	\section{NOISY DATA}\label{IOD::sec::noisy_data}
	%-----------------------------------------------------------------------------------------
	
	Let us now consider the following noisy version of the  feedback interconnection \eqref{eq::sys_simple}, which involves an additional noise input $n$ and an error signal $e$:
	\begin{equation}
		\begin{aligned}
			\label{IOD::eq::sys}
			\mat{c}{x(k+1) \\   z(k) \\ e(k) \\  y(k)}
			&= \mat{cccc}{A & B_w & B_n & B_r  \\
				C_z & D_{zw} & D_{zn} & D_{zr}  \\
				C_e & D_{ew} & D_{en} & D_{er} \\
				C_y & D_{yw} & D_{yn} & D_{yr}}\!
			\mat{c}{x(k) \\  w(k) \\ n(k) \\ r(k)},\\
			w(k) &= \Del_\ast z(k).
		\end{aligned}
	\end{equation}
	Without any knowledge about the noise, the goal is to derive a data-enhanced stability test for \eqref{IOD::eq::sys} and, concurrently, determine a good upper bound on the energy gain
	\begin{equation}\label{gain}
		\sup_{d\in \ell_2^{n_d} \setminus \{0\}, ~ x(0) = 0} \frac{\|e\|_{\ell_2}}{\|d\|_{\ell_2}}
	\end{equation}
	of its performance channel, where $d := \smat{n \\ r}$ denotes the so-called generalized disturbance. In the sequel, we use the abbreviations $B_d := (B_n\ B_r)$, 
	$D_{zd} := (D_{zn}\ D_{zr})$ and $D_{ed} := (D_{en}\ D_{er})$.		
	
	%-------------------------------------------------------------------------------------
	\subsection{A Basic Robust Performance Analysis Result}
	%-------------------------------------------------------------------------------------
	
	Without taking any observed data into account, stability and performance of the uncertain interconnection \eqref{IOD::eq::sys} can again be analyzed with the full-block S-procedure. 
	In view of \eqref{gain}, we pick the performance index $P_\ga := \smat{I & 0 \\ 0 & -\ga^{2} I}$ and recall the following result from \citep{Sch00}.

	\begin{lemm}
		\label{IOD::lem::basic_perf}
		Let $\Pb$ be a multiplier set for $\Delf$. Then the interconnection \eqref{IOD::eq::sys} is stable and the energy gain \eqref{gain} is bounded by $\ga>0$ if there exist matrices %$X\in \S^n$ and $P\in \S^{n_z + n_w}$ satisfying
		\begin{subequations}
			\label{IOD::lem::eq::basic_perf}
			\begin{equation}
				X \cg 0 \teq{ and } P \in \Pb
			\end{equation}
			satisfying
			\begin{equation}
				\label{IOD::lem::eq::basic_perfb}
				(\bullet)^\top \mat{cc|c:c}{X & 0 & & \\ 0 & -X & &\\ \hline & & P& \\ \hdashline & & & P_\ga^{-1}}\mat{ccc}{I & 0 & 0\\ -A^\top & -C_z^\top & -C_e^\top\\ \hline 0 & I & 0\\ -B_w^\top & -D_{zw}^\top & -D_{ew}^\top \\ \hdashline 0 & 0 & I \\ -B_d^\top & -D_{zd}^\top & -D_{ed}^\top} \cg 0.
			\end{equation}
		\end{subequations}
	\end{lemm}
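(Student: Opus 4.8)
The plan is to run the congruence argument already used in the proof of Theorem~\ref{IOD::theo::noise-free}, now with no data multiplier but with the performance channel present; the only property of the uncertainty needed is that, since $\Del_\ast\in\Delf$ and $P\in\Pb$, the inequality $\smat{-\Del_\ast^\top\\I}^\top P\smat{-\Del_\ast^\top\\I}\cle 0$ holds, together with $P_\ga^{-1}=\smat{I&0\\0&-\ga^{-2}I}$. First I would settle well-posedness, i.e.\ that $I-D_{zw}\Del_\ast$, equivalently $I-D_{zw}^\top\Del_\ast^\top$, is nonsingular. Arguing by contradiction, pick $0\neq u\in\ker(I-D_{zw}^\top\Del_\ast^\top)$ and set $v:=\Del_\ast^\top u$, so $u=D_{zw}^\top v$ and $v\neq 0$. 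Right-multiplying \eqref{IOD::lem::eq::basic_perfb} by $\col(0,v,0)$ (partitioned conformably with the three column blocks of the outer factor) and by its transpose from the left produces $0<-v^\top C_zXC_z^\top v+\smat{v\\-u}^\top P\smat{v\\-u}-\ga^{-2}\|D_{zd}^\top v\|^2$. Because $\smat{v\\-u}=\smat{-\Del_\ast^\top\\I}(-u)$, the middle term equals $u^\top\smat{-\Del_\ast^\top\\I}^\top P\smat{-\Del_\ast^\top\\I}u\cle 0$, while the other two terms are nonpositive since $X\cg 0$ and $\ga>0$; this contradiction proves the claim. Hence the loop of \eqref{IOD::eq::sys} closed with $w=\Del_\ast z$ is well defined, with describing matrices obtained from $(A,B_d,C_e,D_{ed})+(B_w,D_{ew})\ch\Del(C_z,D_{zd})$, where $\ch\Del:=\Del_\ast(I-D_{zw}\Del_\ast)^{-1}$.

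The main step is the congruence. I would right-multiply \eqref{IOD::lem::eq::basic_perfb} by the full-column-rank matrix $\smat{I&0\\\ch\Del^\top B_w^\top&\ch\Del^\top D_{ew}^\top\\0&I}$ and by its transpose from the left. A short computation, exactly parallel to the ``Stability'' part of the proof of Theorem~\ref{IOD::theo::noise-free}, shows that the $\smat{X&0\\0&-X}$-block of the outer factor turns into $\smat{I&0\\-\Ab^\top&-\Cb^\top}$, the $P$-block into $\smat{-\Del_\ast^\top\\I}$ times some matrix, and the $P_\ga^{-1}$-block into $\smat{0&I\\-\Bb^\top&-\Db^\top}$, where $\Ab=A+B_w\ch\Del C_z$, $\Bb=B_d+B_w\ch\Del D_{zd}$, $\Cb=C_e+D_{ew}\ch\Del C_z$ and $\Db=D_{ed}+D_{ew}\ch\Del D_{zd}$ are the describing matrices of the closed loop. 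Dropping the resulting $P$-contribution, which is nonpositive by the multiplier inequality, the transformed strict inequality reads $\smat{X&0\\0&I}\cg\smat{\Ab&\ga^{-1}\Bb\\\Cb&\ga^{-1}\Db}\smat{X&0\\0&I}(\bullet)^\top$. Its $(1,1)$-block gives $\Ab X\Ab^\top\cl X$ with $X\cg 0$, so $\Ab$ is Schur and the (LTI) closed loop --- hence the interconnection \eqref{IOD::eq::sys} --- is stable in the sense of Lemma~\ref{IOD::lem::basic}; moreover, the full inequality is precisely the dual strict bounded-real inequality certifying that the scaled closed loop $(\Ab,\ga^{-1}\Bb,\Cb,\ga^{-1}\Db)$, whose transfer matrix is $\ga^{-1}$ times that of the closed loop, has $\ell_2$-gain below $1$; equivalently, the energy gain \eqref{gain} is bounded by $\ga$.

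I do not expect a genuine obstacle here: the statement is classical (cf.\ \citep{Sch00}). The only points requiring attention are the bookkeeping in the congruence --- keeping transposes and the exponents of $\ga$ straight, i.e.\ recognising \eqref{IOD::lem::eq::basic_perfb} as a dualised LMI --- and checking that the right-multiplying matrix indeed has full column rank so that strictness of the inequality is preserved. Alternatively, one may quote the full-block S-procedure together with the dualisation lemma from \citep{Sch00} directly and thereby obtain the result without the explicit calculation.
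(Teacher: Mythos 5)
Your argument is correct and is essentially the paper's own approach: the paper states this lemma without proof (citing Scherer 2000), but the congruence you perform --- right-multiplying \eqref{IOD::lem::eq::basic_perfb} by $\smat{I&0\\\ch\Del^\top B_w^\top&\ch\Del^\top D_{ew}^\top\\0&I}$, discarding the nonpositive $P$-term via the dual multiplier inequality, and reading off the dual bounded-real inequality for the closed loop --- is exactly the technique the authors use in the ``Stability'' part of the proof of Theorem~\ref{IOD::theo::noise-free} and sketch for Theorem~\ref{IOD::theo::noisy}, and your well-posedness contradiction with $\col(0,v,0)$ mirrors theirs. No gaps; the bookkeeping (transposes, $P_\ga^{-1}=\smat{I&0\\0&-\ga^{-2}I}$, full column rank of the congruence factor) all checks out.
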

	
	After using the Schur complement, one can minimize $\ga^2$ under the LMI constraints \eqref{IOD::lem::eq::basic_perf} in order to determine the best upper bound on the energy gain which can be guaranteed by multipliers in the set $\Pb$.
	
	%-------------------------------------------------------------------------------------
	\subsection{Incorporating Data}
	%-------------------------------------------------------------------------------------
	
	Once again, we target at enhancing the robust stability and performance test of Lemma~\ref{IOD::lem::basic_perf} by incorporating knowledge of some finite-time horizon trajectory \eqref{data} generated by
	the interconnection \eqref{IOD::eq::sys}. 
	%\begin{equation*}
	%		\Dc := \big(x_\ast, (n_\ast(k))_{k=0}^{h-1},  (r_\ast(k))_{k=0}^{h-1}, (y_\ast(k))_{k=0}^{h-1}\big),
	%	\end{equation*}
	%\red{We now} assume that a noisy input-output trajectory on a finite horizon is available which results from the interconnection \eqref{eq::sys_simple} initialized in a known point $x_\ast$. The involved data is
	The noise sequence $n_\ast$ is not available, but we assume that $x_\ast$ is known to simplify the exposition. 
	
	As before, we formally introduce the set
	\begin{equation*}
		\Delf_\Dc := \left\{\Del~:~ \begin{array}{l}\text{The measured output of \eqref{IOD::eq::sys} with} \\ \text{$x(0) = x_\ast$ and $\Del_\ast$ replaced by $\Del$} \\ \text{in response to the input $\smat{n_\ast \\ r_\ast}$ is $y_\ast$}
		\end{array} \right\}
	\end{equation*}
	of all constant parametric uncertainties that are compatible with the data $\Dc$.
	
	Similarly as \cite{BerSch20}, we assume that some information on the noise sequence $n_\ast$ is available. In contrast to other approaches in the literature, however, we work with constraints imposed on the vertically stacked noise sequence.		
	\begin{assu}
		\label{IOD::ass::noise}
		We have a multiplier set $\Pb_n$ for $\{n_\ast\}$.
	\end{assu}		
	Dedicated examples of sets $\Pb_n$ encoding properties of the noise $n_\ast$ can be extracted from \citep{BerSch20}.
	
	%-------------------------------------------------------------------------------------
	\subsection{Main Result}
	%-------------------------------------------------------------------------------------
	
	As in the noise-free case, we get the following data-based equation by lifting the signals in the interconnection \eqref{IOD::eq::sys}:
	\begin{equation}
		\label{IOD::eq::data_eq3}
		\arraycolsep=2pt
		\begin{aligned}
			0 =&\, y_\ast  \!-\!\big[\h C_{hy}\! +\! \h D_{hyw}\ch \Del_h\h C_{hz} \big]x_\ast \!-\!\big[\h D_{hyn}\! +\! \h D_{hyw}\ch \Del_h \h D_{hzn} \big]n_\ast \\
			&\phantom{y_\ast}-\!\big[\h D_{hyr} \!+\! \h D_{hyw}\ch \Del_h\h D_{hzr} \big] r_\ast\\
			=&\, \mat{cc}{\h D_{hyw}\ch \Del_h & I}
			\mat{cccc}{0 & - \h C_{hz} & -\h D_{hzn} & -\h D_{hzr}\\ I& - \h C_{hy} & -\h D_{hyn}& -\h D_{hyr}}\mat{c}{y_\ast \\ x_\ast \\ n_\ast \\ r_\ast}\\
			=&\,\mat{cc}{\h D_{hyw}\ch \Del_h & I}\hspace{-0.5ex}
			\mat{cccc}{0 & - \h C_{hz} & -\h D_{hzn} & -\h D_{hzr}\\ I& - \h C_{hy} & -\h D_{hyn}& -\h D_{hyr}}\hspace{-1ex}\mat{cc}{y_\ast & 0 \\ x_\ast & 0 \\ 0 & I \\ r_\ast & 0} \hspace{-1ex}\mat{c}{I \\ n_\ast}\!.
		\end{aligned}
	\end{equation}
	Based on this equation and Assumption \ref{IOD::ass::noise}, we construct the following multiplier set involving all of the available data as in Subsection \ref{IOD::sec::unknown_initial_condition}:
	\begin{equation}
		\Pb_\Dc\! :=\! \left\{(\bullet)^{\!\top} P_n\mat{cc}{y_\ast & 0 \\ x_\ast &  0 \\ 0 & I \\ r_\ast & 0}^{\hspace{-1.2ex}\top}\hspace{-1ex} \mat{cc}{0 & I \\ -\h C_{hz}^\top & -\h C_{hy}^\top \\ -\h D_{hzn}^\top & -\h D_{hyn}^\top \\ -\h D_{hzr}^\top & -\h D_{hyr}^\top} \,\middle|\,   P_n \in \Pb_n \right\}\!.
		\label{IOD::eq::multiplier_set_noisy}
	\end{equation}
	%	Indeed, by construction any matrix $P_\Dc \in \Pb_\Dc$ satisfies
	%	\begin{equation*}
	%		\begin{aligned}
	%			&\phantom{=}~(\bullet)^\top P_\Dc \mat{c}{\ch \Del_h^\top \h D_{hyw}^\top \\ I} \\
	%			%
	%			&= (\bullet)^\top P_n \mat{cc}{y_\ast & 0\\ x_\ast & 0 \\ 0 & I \\ r_\ast & 0}^{\hspace{-1.2ex}\top}\hspace{-1ex} \mat{cc}{0 & I \\ -\h C_{hz}^\top & -\h C_{hy}^\top \\ -\h D_{hzn}^\top & -\h D_{hyn}^\top \\ -\h D_{hzr}^\top & -\h D_{hyr}^\top}\! \mat{c}{\ch \Del_h^\top \h D_{hyw}^\top \\ I} \\
	%			%
	%			&= (\bullet)^\top P_n \mat{cc}{y_\ast & 0\\ x_\ast & 0 \\ 0 & I \\ r_\ast & 0}^{\hspace{-1.2ex}\top}\hspace{-1ex} \mat{c}{I \\ -\big[\h C_{hy} + \h D_{hyw}\ch \Del_h\h C_{hz}\big]^\top  \\ -\big[\h D_{hyn} + \h D_{hyw}\ch \Del_h \h D_{hzn}\big]^\top \\ -\big[\h D_{hyr} + \h D_{hyw}\ch \Del_h \h D_{hzr}\big]^\top} \\
	%			%
	%			&\stackrel{\mathclap{\eqref{IOD::eq::data_eq3}}}{=} (\bullet)^\top P_n \mat{c}{-n_\ast^\top \\ I} \big[\h D_{hyn} + \h D_{hyw}\ch \Del_h \h D_{hzn} \big]^\top
	%
	%			\stackrel{P_n \in \Pb_n}{\cle} 0.
	%		\end{aligned}
	%	\end{equation*}
	%	
	%	\gr{A word on the conceptual way how to build the multiplier set?}
	In expanding Assumption \ref{IOD::ass::kernel_inclusion}, we now require to be able to extract from $y$ suitable information about how $w$ affects both the state $x$ and the error $e$ in the 
	interconnection \eqref{IOD::eq::sys}. This leads to the following 
	assumption on the matrices $\h D_{hyw}$ and  $\h B_{hw}$, $\h D_{hew}$ of the system \eqref{IOD::eq::sys} after lifting.
	
	\begin{assu}
		\label{IOD::ass::kernel_inclusion2}
		There exists some $\sigma \in \{1, \dots, h\}$ such that the inclusions
		$\ker(\h D_{hyw}) \subset \ker\left(\mat{cc}{\h B_{\sigma w} & 0_{n \times (h-\sigma)n_w}} \right)$ and
		$\ker(\h D_{hyw}) \subset \ker\left(\mat{cc}{ \h D_{\sigma ew} & 0_{n_e \times (h-\sigma)n_w}} \right)$ hold true.
	\end{assu}
	
	Note again that Assumption \ref{IOD::ass::kernel_inclusion2} and the block triangular structure of $\ch \Del_h$ imply that the identity
	\begin{equation}
		\mat{c}{M_b \\ M_d}\h D_{hyw} \ch \Del_h
		%
		%=  \mat{cc}{\h B_{\sigma w} & 0 \\ \h D_{\sigma ew} & 0} \mat{cc}{\ch \Del_{\sigma} & 0\\ \bullet & \bullet}
		%
		= \mat{c}{\h B_{\sigma w}\\ \h D_{\sigma ew}} \ch \Del_{\sigma}N
		\label{IOD::eq::data_eq4}
	\end{equation}
	is valid for the matrices
	\begin{equation*}
		\mat{c}{M_b \\ \hline M_d} \!:=\! \mat{cc}{\h B_{\sigma w} & 0_{n \times (h-\sigma) n_w} \\ \hline \!\h D_{\sigma ew} & 0_{\sigma n_e \times (h-\sigma)n_w}}\!\h D_{hyw}^+
		\text{ and }
		N:= \mat{cc}{I_{\sigma n_z} & 0}.
	\end{equation*}

	This leads to our second main result.
	
	\begin{theo}%-------------------------------------------------------------------------
		\label{IOD::theo::noisy}
		Let $\Pb$ be a multiplier set for $\Delf$ and let Assumptions \ref{IOD::ass::noise} and \ref{IOD::ass::kernel_inclusion2} be satisfied. Moreover, let $\Pb_\Dc$ be given by \eqref{IOD::eq::multiplier_set_noisy}. Then the interconnection \eqref{eq::sys_simple} is stable and the energy gain \eqref{gain} is bounded by $\ga > 0$ if there exist %symmetric matrices $X$, $P=\smat{Q & S \\ S^\top & R}$ and $P_\Dc$ satisfying
		\begin{subequations}
			\begin{equation}
				X \cg 0,\quad P =\smat{Q & S \\ S^\top & R}\in \Pb \teq{ and } P_\Dc \in \Pb_\Dc
			\end{equation}
			satisfying, with 
			$\h P := \smat{\!\!I_\sigma \kron Q & I_\sigma \kron S \\ I_\sigma \kron S^\top & I_\sigma \kron R}$, the LMI		
			\begin{equation}
				(\bullet)^{\!\top}\hspace{-0.7ex} \mat{cc|c:c:c}{X & 0 & & &\\ 0 & -X & & &\\ \hline & & \h P & &\\ \hdashline & & & P_\Dc & \\ \hdashline & & & & P_\ga}
				\hspace{-1ex}
				\mat{ccc}{I & 0 & 0\\ -(A^\sigma)^\top & - \h C_{\sigma z}^\top & -\h C_{\sigma e}^\top\\ \hline
					0 & I & 0 \\ -\h B_{\sigma w}^\top & -\h D_{\sigma zw}^\top & -\h D_{\sigma ew}^\top \\ \hdashline
					0 & N^\top & 0\\ M_b^\top & 0 & M_d^\top \\ \hdashline 0 & 0 & I \\ -\h B_{\sigma d}^\top & -\h D_{\sigma zd}^\top & -\h D_{\sigma ed}^\top} \!\cg\! 0.
				\label{IOD::theo::eq::LMIb_noisy}
			\end{equation}
		\end{subequations}
	\end{theo}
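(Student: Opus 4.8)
The plan is to imitate the proof of Theorem~\ref{IOD::theo::noise-free}, now dragging the performance channel $d\to e$ along and letting the noise multiplier of Assumption~\ref{IOD::ass::noise} play the role that the known initial state played there. Two multiplier inequalities feed the full-block S-procedure. The first is the lifted a~priori bound: exactly as in \eqref{IOD::pro::eq::lifted_apriori_multiplier}, the Kronecker structure of $\h P$ and $\h\Del_\sigma=I_\sigma\kron\Del_\ast$ gives $(\bullet)^\top\h P\smat{-\h\Del_\sigma^\top\\I}=I_\sigma\kron\big((\bullet)^\top P\smat{-\Del_\ast^\top\\I}\big)\cle 0$ for every $P\in\Pb$, and with $\Del_\ast$ replaced by any $\Del\in\Delf$. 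The second is the data bound: as in Subsection~\ref{IOD::sec::unknown_initial_condition}, \eqref{IOD::eq::data_eq3} shows that applying $\smat{y_\ast&0\\x_\ast&0\\0&I\\r_\ast&0}^\top\smat{0&I\\-\h C_{hz}^\top&-\h C_{hy}^\top\\-\h D_{hzn}^\top&-\h D_{hyn}^\top\\-\h D_{hzr}^\top&-\h D_{hyr}^\top}$ to $\smat{\ch\Del_h^\top\h D_{hyw}^\top\\I}$ produces a matrix lying in $\ker\smat{I&n_\ast^\top}$, hence of the form $\smat{-n_\ast^\top\\I}V$ for some $V$; substituting this into any $P_\Dc\in\Pb_\Dc$ from \eqref{IOD::eq::multiplier_set_noisy} and using Assumption~\ref{IOD::ass::noise} yields $(\bullet)^\top P_\Dc\smat{\ch\Del_h^\top\h D_{hyw}^\top\\I}=V^\top\big((\bullet)^\top P_n\smat{-n_\ast^\top\\I}\big)V\cle 0$, i.e.\ $\Pb_\Dc$ is a multiplier set for $\{-\h D_{hyw}\ch\Del_h:\Del\in\Delf_\Dc\}$.

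Next I would prove \emph{well-posedness} as in Theorem~\ref{IOD::theo::noise-free}: supposing $(I-\h D_{\sigma zw}^\top\h\Del_\sigma^\top)u=0$ for some $u\neq 0$ and setting $v:=\h\Del_\sigma^\top u$, evaluate at $v$ the diagonal block of \eqref{IOD::theo::eq::LMIb_noisy} selected by its $\h z$-channel column. The only genuinely new contribution is that of the performance row block, which equals $-\ga^{-2}v^\top\h D_{\sigma zd}\h D_{\sigma zd}^\top v\le 0$ and hence does no harm; together with $-v^\top\h C_{\sigma z}X\h C_{\sigma z}^\top v\le 0$, with the lifted a~priori inequality applied to $u$, and with the fact that the $P_\Dc$-term is $\le 0$ (because $(I-\h\Del_\sigma^\top\h D_{\sigma zw}^\top)v=0$ together with the block triangular structure of $\h\Del_h,\h D_{hzw}$ reduces it to a quantity controlled by Assumption~\ref{IOD::ass::noise}), strictness of the LMI yields $0<(\text{sum of four nonpositive terms})$, a contradiction. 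Thus $I-\h D_{\sigma zw}\h\Del_\sigma$ is nonsingular, and by its block triangular structure so is $I-D_{zw}\Del_\ast$; in particular $\ch\Del_\sigma$ is well defined.

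Finally I would extract \emph{stability and the energy-gain bound}. Right-multiplying \eqref{IOD::theo::eq::LMIb_noisy} by the full column rank matrix
\[
T:=\mat{cc}{I&0\\ \ch\Del_\sigma^\top\h B_{\sigma w}^\top & \ch\Del_\sigma^\top\h D_{\sigma ew}^\top\\ 0&I},
\]
which closes the uncertainty loop at once for the state and for the error, a short computation turns the $\diag(X,-X)$ rows into $\smat{I&0\\-A_{cl}^\top&-C_{cl}^\top}$ and the performance rows into $\smat{0&I\\-B_{cl}^\top&-D_{cl}^\top}$, where
\[
\mat{cc}{A_{cl}&B_{cl}\\ C_{cl}&D_{cl}}:=\mat{cc}{A^\sigma+\h B_{\sigma w}\ch\Del_\sigma\h C_{\sigma z} & \h B_{\sigma d}+\h B_{\sigma w}\ch\Del_\sigma\h D_{\sigma zd}\\ \h C_{\sigma e}+\h D_{\sigma ew}\ch\Del_\sigma\h C_{\sigma z} & \h D_{\sigma ed}+\h D_{\sigma ew}\ch\Del_\sigma\h D_{\sigma zd}}
\]
is the $\sigma$-step lifted closed loop; the $\h P$-rows become a factor of the shape $\smat{-\h\Del_\sigma^\top\\I}(\,\cdot\,)$ as in the noise-free proof, and the $P_\Dc$-rows become $\smat{\ch\Del_h^\top\h D_{hyw}^\top\\I}\smat{M_b^\top&M_d^\top}$ by virtue of \eqref{IOD::eq::data_eq4} --- which is precisely where Assumption~\ref{IOD::ass::kernel_inclusion2}, with its \emph{two} kernel inclusions, is consumed. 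Discarding the $\h P$- and $P_\Dc$-contributions, negative semidefinite by the two multiplier inequalities above, leaves (with the performance index entering inverted, as in Lemma~\ref{IOD::lem::basic_perf})
\[
\mat{cc}{X&0\\0&I}-\mat{cc}{A_{cl}&B_{cl}\\ C_{cl}&D_{cl}}\mat{cc}{X&0\\0&\ga^{-2}I}\mat{cc}{A_{cl}&B_{cl}\\ C_{cl}&D_{cl}}^\top\cg 0.
\]
Its $(1,1)$ block forces $X\cg A_{cl}XA_{cl}^\top$, so $A_{cl}$ is Schur, and a Schur-complement argument (the bounded real lemma in dual form) upgrades the full inequality to an energy gain of $(A_{cl},B_{cl},C_{cl},D_{cl})$ strictly below $\ga$. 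Since the $\sigma$-step lifting operator is an isometric isomorphism of $\ell_2$, the closed loop \eqref{IOD::eq::sys} --- whose $\sigma$-step transition matrix is $A_{cl}$ --- inherits both exponential stability and an energy gain $\le\ga$. As $P$ and $P_\Dc$ enter only through the multiplier properties for $\Delf$ and $\Delf_\Dc$, the argument is in fact valid with $\Del_\ast$ replaced by any $\Del\in\Delf\cap\Delf_\Dc$.

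The step I expect to be the main obstacle is this last one. Unlike in the noise-free setting the loop has to be closed consistently across the state \emph{and} the error channel at the same time, so the full strength of Assumption~\ref{IOD::ass::kernel_inclusion2} --- both inclusions, hence the complete form of \eqref{IOD::eq::data_eq4} --- is required to make the $P_\Dc$-block of the multiplied LMI align with the data multiplier; and one must track the performance index carefully (it enters \eqref{IOD::theo::eq::LMIb_noisy} in inverse form) so that the resulting dissipation inequality certifies an energy gain below $\ga$ rather than its reciprocal.
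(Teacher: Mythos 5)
Your proposal is correct and follows essentially the same route as the paper: the paper's own proof is exactly the sketch you give (repeat the noise-free argument, with the data multiplier validated through the factorization over $\smat{-n_\ast^\top \\ I}$ and Assumption~\ref{IOD::ass::noise}, and with the loop closed by the three-block matrix $T$ whose product with $\smat{0 & N^\top & 0\\ M_b^\top & 0 & M_d^\top}$ is precisely the extension of \eqref{key} the paper displays, consuming both kernel inclusions of Assumption~\ref{IOD::ass::kernel_inclusion2} via \eqref{IOD::eq::data_eq4}). You also correctly track that the performance index must enter in inverted (dual) form as in Lemma~\ref{IOD::lem::basic_perf} for the resulting dual bounded-real inequality to certify a gain below $\ga$.
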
%---------------------------------------------------------------------------

	The proof follows the same lines as the one of Theorem \ref{IOD::theo::noise-free}, but now based on the following extension of  \eqref{key}:
	%relies on multiplying  \eqref{IOD::theo::eq::LMIb_noisy} with the matrix
	%\begin{equation*}
	%		\mat{cc}{I & 0 \\ \ch \Del_\sigma \h B_{\sigma w}^\top & \ch \Del_{\sigma}\h D_{\sigma ew}^\top \\ 0 & I}.
	%	\end{equation*}
	%	Assumption \ref{IOD::ass::kernel_inclusion2} permits us to infer
	\begin{multline*}
		\mat{ccc}{0 & N^\top & 0\\ M_b^\top & 0 & M_d^\top}	\mat{cc}{I & 0 \\ \ch \Del_\sigma \h B_{\sigma w}^\top & \ch \Del_{\sigma}\h D_{\sigma ew}^\top \\ 0 & I}\\
		= \mat{c}{N^\top \ch \Del_{\sigma} \mat{cc}{\h B_{\sigma w}^\top & \h D_{\sigma ew}^\top} \\ \mat{cc}{M_b^\top& M_d^\top}}
		\stackrel{\eqref{IOD::eq::data_eq4}}{=} \mat{c}{\h \Del_h^\top \h D_{hyw}^\top\\ I} \mat{c}{M_b \\ M_d}^{\!\top}.
	\end{multline*}
	This permits to exploit \eqref{IOD::eq::data_eq3} and \eqref{IOD::eq::multiplier_set_noisy} as earlier.

	\begin{rema}
		\label{IOD::rema::toeplitz_multiplier}
		Following Remark \ref{IOD::rema::Toeplitz}, we can reduce the conservatism in Theorem \ref{IOD::theo::noisy} by considering Toeplitz matrices for the available data. If
		$N$ denotes the Toeplitz matrix corresponding to the noise signal $n_\ast$, this requires to work with a multiplier set for $\{N\}$.
		In case of $\|n_\ast(k)\|_2 \leq \eps$ for all $k \in \{0, \dots, h-1\}$, one choice is
		\begin{equation*}
			\left\{\mat{cc}{\diag(\la_1, \dots, \la_h) & 0 \\ 0 & -\sum_{k = 1}^{h} \la_k\eps^2 f_k \smat{0 & 0 \\ 0 & I_{g_k}}} \colon \la_k \geq 0 \right\}
		\end{equation*}
		with $f_k := h\!-\!k\!+\!1$, $g_k := n_n f_k$ and $n_n$ the length of $n_\ast(k)$.
	\end{rema}

	\begin{rema}
		\label{IOD::rema::comp_burden}
		Depending on the strategies in Remarks \ref{IOD::rema::structured_multiplier} and  \ref{IOD::rema::Toeplitz}, the computational burden can increase quite drastically with the horizon length $h$. As a remedy, one can artificially split too long data trajectories into multiple smaller ones and exploit Remark \ref{IOD::rema::multiple}. This is much more affordable, but requires estimates of the start of each state trajectory.
	\end{rema}

	%-----------------------------------------------------------------------------------------
	\section{EXAMPLE}\label{IOD::sec::example}
	%-----------------------------------------------------------------------------------------
	
	\begin{figure}
		\begin{center}
			\includegraphics[width=0.49\textwidth]{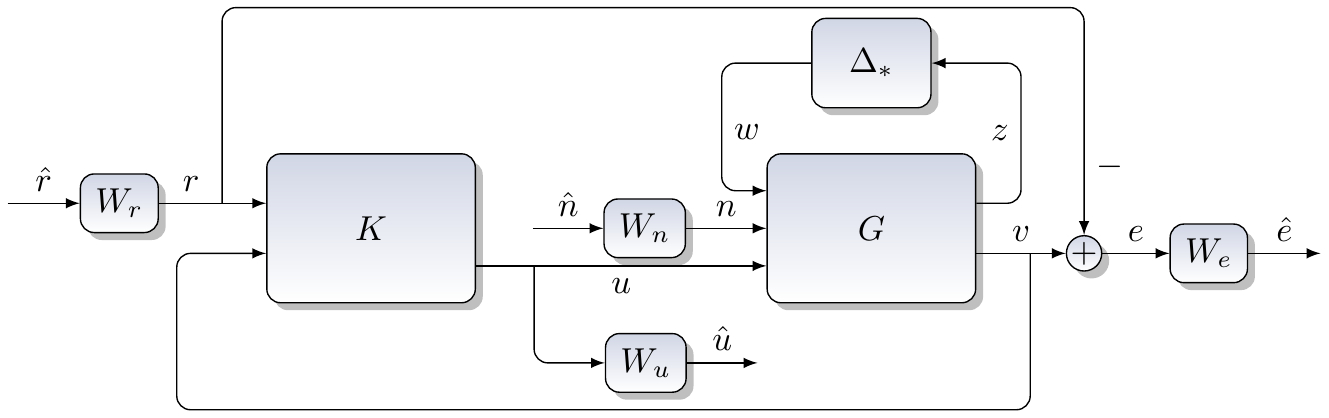}
		\end{center}
		\vspace{-1ex}
		\caption{A standard weighted tracking configuration.}
		\label{NSY::fig::tracking_config}
	\end{figure}

	Let us consider a simple model of a flexible satellite explained in \citep{FraPow10}  and given by
	\begin{equation}
		\mat{c}{\dot {\t x}(t) \\ \hline v(t)}
		= \mat{cccc|c:c}{0 & 1 & 0 & 0 & 0 & 0 \\ -\frac{k}{J_2} & -\frac{b}{J_2} & \frac{k}{J_2} & \frac{b}{J_2} & 1 & 0 \\ 0 & 0 & 0 & 1 & 0 & 0 \\ \frac{k}{J_1} & \frac{b}{J_1} & -\frac{k}{J_1} & -\frac{b}{J_1} & 0 & \frac{1}{J_1} \\ \hline 1 & 0 & 0 & 0 & 0 & 0}
		\mat{c}{\t x(t) \\ \hline n(t) \\ u(t)}.
		\label{DI::eq::sys_ex}
	\end{equation}
	Here, the state is $\t x = \col(\theta_2, \dot \theta_2, \theta_1, \dot \theta_1)$ and the constants are $J_1 = 1$, $J_2 = 0.1$, $k_* = 0.091$ and $b_* = 0.0036$. Now suppose that $k_*$ and $b_*$ are merely known to satisfy
	\begin{equation}
		k_* \in [0.08, 0.12]
		\teq{ and }
		b_* \in [0.0034, 0.02].
		\label{IOD::eq::unc_intervals}
	\end{equation}
	Next, we generate an LFR of this model with the uncertain parameters $k$ and $b$ \citep{ZhoDoy98} and discretize the resulting generalized plant via zero-order hold with a sampling time of $0.05$ seconds.
	Based on the configuration in Fig.~\ref{NSY::fig::tracking_config} with the weights
	\begin{equation*}
		W_r = 1,\quad
		W_n = 0.4, \quad
		W_u = 0.1,\quad
		W_{e}(z)= \frac{z-0.9567}{2z-2}, %\frac{0.5s + 0.433}{s + 0.00433}
	\end{equation*}
	we design a dynamic $H_\infty$ controller $K$ for the parameters
	\begin{equation*}
		k = 0.1\neq k_*
		\teq{ and }
		b = 0.0117\neq b_*.
	\end{equation*}
	The determined controller is still stabilizing the actual plant (with the parameters $k_*$ and $b_*$)	and the energy gain of the actual closed-loop system is bounded by
	\begin{equation*}
		1.1588.
	\end{equation*}
	Without knowledge of $k_*$, $b_*$, this bound can actually not be determined. Instead, we estimate a gain bound based on Theorem~\ref{IOD::theo::noisy}.	To this end, We include the measured signal $y := \smat{r \\ v}$ as an additional output and model our system as \eqref{IOD::eq::sys} with $\Del_\ast = \smat{k & 0 \\ 0 & b} \in \Delf := \{\smat{k & 0 \\ 0 & b}~\colon~ \eqref{IOD::eq::unc_intervals} \}$.
	A classical robustness anaylsis relying on Lemma \ref{IOD::lem::basic_perf} with DG-scalings \citep{Sch00} leads to the much larger upper bound
	\begin{equation*}
		9.8433.
	\end{equation*}
	Note that all computations are performed with LMILab \citep{GahNem95} and all corresponding files can be found under the doi: \href{https://doi.org/10.5281/zenodo.7761837}{10.5281/zenodo.7761837}.
	Next, we assume that we have available an input output trajectory
	of the closed-system being initialized in zero and affected by noise satisfying $\|n_\ast(k)\|_2 \leq \eps$ for all $k$. 
	The employed reference signal is $r(t) = 3\chi_{[0, 1]}(t) - 2\chi_{[1.5, 3]}(t)$, where $\chi_I$ denotes the indicator function on the interval $I$. Note that the choice of $r$ influences the computed upper bounds.
	
	Since Assumption \ref{IOD::ass::kernel_inclusion2} is satisfied for $\sigma = h-1$, we can employ Theorem \ref{IOD::theo::noisy} with DG-scalings for $\Delf$ and the noise multipliers as in Remark \ref{IOD::rema::toeplitz_multiplier}.
	This leads to the following upper bounds for several data-lengths and noise bounds $\epsilon$:
	
	\begin{center}
		\begin{tabular}{cccccc}
			$\eps$ & $h = 10$ & $h = 15$ & $h = 20$ & $h = 30$ & $h = 40$ \\ \hline
			%$0.1$ &    $7.9366$ &   $5.9087$   & $5.0922$  &  $4.7167$  &  $4.4440$ \\
			%$0.05$ & $8.4756$ &   $7.4544$ &  $5.8190$ & $5.1444$  & $4.8231$ \\
			%$0.01$ & $5.8508$ &   $4.8302$ &  $4.2039$ & $3.8206$  & $3.5149$\\
			%$0.1$ & $8.4161$ & $7.2383$ & $6.3681$ & $5.8003$ & $5.3449$ \\
			%$0.05$ & $7.3126$ & $6.0117$ & $5.2240$ & $4.7371$ & $4.3614$ \\
			%$0.01$ & $5.6778$ & $4.5474$ & $3.5819$ & $2.9345$ & $2.5745$ \\
			0.1 & 5.9550  &  4.6522 &   3.8936  &  3.0641  &  2.9674 \\
			0.05 & 5.5826 &   4.4327 &   3.7015 &   2.8915  &  2.8117 \\
			0.01 & 5.2073 &   4.1293  &  3.4027 &   2.6295   & 2.5675\\
		\end{tabular}
	\end{center}
	
	% Times for eps = 0.1: 5.4, 19.5, 32.9, 169.3, 656.4
	
	These results show that we can indeed improve our robust analysis test based on data, even if this data does not involve the full state and the LFR describing the underlying system is general.
	However, as is, this enhancement can be computationally rather demanding (see Remark \ref{IOD::rema::comp_burden}); for example, the computation of the upper bounds for $\eps = 0.1$ took about $0.1, 5.4, 19.5, 32.9, 169.3$ and $656.4$ seconds on a general purpose desktop computer.
	
	%-----------------------------------------------------------------------------------------
	\section{CONCLUSIONS}\label{IOD::sec::conclusions}
	%-----------------------------------------------------------------------------------------
	
	We present an approach to systematically enhance robust analysis tests by incorporating measured input and output data, by benefiting from well-established and powerful tools in robust control. In contrast to most of the existing purely data-driven approaches, we employ linear fractional representations, which offer a high flexibility for including available prior physical knowledge about the underlying system. It is a key novelty that we do not require the system state to be part of the gathered data. 
	Technically, we rely on a lifted version of the system description and show how to learn information about the system's unknown components through constructing multiplier classes based on the gathered data. A simple numerical example illustrates the benefit to substantially reduce conservatism by taking the available data into account. 
	
	Future efforts are devoted to a reduction of the computational burden for large horizons and the inclusion of unknown system components beyond time-invariant parameters.

	%\begin{ack}
	%Place acknowledgments here.
	%\end{ack}
	\renewcommand\refname{REFERENCES}
	\bibliography{literatur}  % bib file to produce the bibliography
	% with bibtex (preferred)
	
	%		\appendix
	
	%	\section{Auxiliary Results and Technical Proofs}    % Each appendix must have a short title.
	
\end{document}